\documentclass[a4paper]{amsart}[12pt]

\usepackage[latin1]{inputenc} 
\usepackage[T1]{fontenc}      
\usepackage{geometry}         
\usepackage[english]{babel}  
\usepackage{amsmath}                            
\usepackage{amsfonts}
\usepackage{amsthm}                             
                             
\usepackage{amscd}

   \usepackage{amssymb}
   \usepackage{verbatim}
   \newif\ifpdf
   \ifpdf
     \usepackage[pdftex]{graphicx}
     \usepackage[pdftex]{hyperref}
   \else
     \usepackage{graphicx}
   \fi
   
\usepackage{dsfont}\let\mathbb\mathds
\usepackage[all]{xy}
   
   \textwidth=135mm
   \textheight=210mm
   
   \numberwithin{equation}{section}       
   \setcounter{secnumdepth}{4}
   
    \theoremstyle{plain}    
    \newtheorem{thm}{Theorem}[section]
    \numberwithin{equation}{section} 
    \numberwithin{figure}{section} 
    \theoremstyle{plain}
    \theoremstyle{plain}    
    \theoremstyle{plain}    
    \theoremstyle{plain}    
    \newtheorem{lem}[thm]{Lemma} 
    \theoremstyle{remark}
    
    \theoremstyle{definition}
   
   \newtheorem*{thmM}{Main Theorem}
   \newtheorem*{thmA}{Theorem A}

   \theoremstyle{definition}
   \newtheorem{defi}[thm]{Definition}

\newcommand{\R}{\mathbb{R}}  
 
\newcommand{\N}{\mathbb{N}} 
\newcommand{\Z}{\mathbb{Z}} 
\newcommand{\C}{\mathbb{C}}

\newcommand{\teich}{\mathrm{Teich}}
\newcommand{\belf}{\mathrm{bel}(f)}
\newcommand{\Belf}{\mathrm{Bel}(f)}

\newtheorem{prop}{Proposition}   
\newtheorem{coro}{Corollary}

\newcommand{\dbar}{\overline{\partial}}
\newcommand{\rs}{\mathbb{P}^1}
\newcommand{\res}{\mathrm{Res}}
\newcommand{\ratd}{\mathrm{Rat}_d}

\newcommand{\limn}{\lim_{n \rightarrow \infty}}
\newcommand{\id}{\mathrm{Id}}
\newcommand{\critf}{\mathrm{Crit}(f)}
\newcommand{\aut}{\mathrm{aut}}
\newcommand{\of}{\mathcal{O}(f)}

\newcommand{\qc}{\mathrm{QC}}

\newcommand{\s}{\mathcal{S}}
\newcommand{\fatou}{\mathcal{F}}
\newcommand{\julia}{\mathcal{J}}
\newcommand{\modspace}{\mathrm{rat}_d}

\title{On the dynamical Teichmüller space}

\author{Matthieu Astorg}
\thanks{This work was partially supported by the ANR Lambda project.}
\address{Institut de Mathématiques de Toulouse \\
Université Paul Sabatier \\
118 route de Narbonne \\
31 062 Toulouse, France}
\email{mastorg@math-univ.toulouse.fr}

\keywords{Complex dynamics; Teichmüller theory; Quasiconformal analysis}

\begin{document}

\maketitle

\begin{abstract}
We prove that the Teichmüller space of a rational map immerses into the moduli
space of rational maps of the same degree, answering a question of McMullen and Sullivan. 
This is achieved through a new description of the tangent and cotangent space of 
the dynamical Teichmüller space.
\end{abstract}

\section{Notations}

The following notations will be used throughout the article : 
\begin{itemize}
\item $\s$ is a Riemann surface
\item $\rs$ is the Riemann sphere
\item $\Omega$ is a hyperbolic open subset of $\rs$
\item $f : \rs \rightarrow \rs$ is a rational map
\item If $\s$ is hyperbolic, $\rho_\s$ is the hyperbolic metric on $\s$
\end{itemize}

\section{Introduction}

Let us denote by  $\ratd$ the space of rational fractions of degree $d$, 
and by $\modspace$ its quotient under the action by conjugacy of the group
of Möbius transformations. For  $f \in \ratd$, we will denote by $\of$  the orbit of $f$ 
under the action of the group of Möbius transformations.

In order to study the geometry of the quasiconformal
conjugacy class of $f$ in $\ratd$ and $\modspace$, McMullen 
and Sullivan introduced in \cite{mcmullen1998quasiconformal} the dynamical 
Teichmüller space of a rational map $f$, as a dynamical analogue of the Teichmüller theory
of surfaces (see \cite{gardiner2000quasiconformal} and \cite{hubbard2006teichmuller} for 
an introduction to Teichmüller theory). McMullen and Sullivan constructed a natural complex
structure on the Teichmüller space of a rational map $f$ of degree $d$, making it into
a complex manifold of dimension at most $2d-2$. They also exhibited a holomorphic map of orbifolds
$\Psi : \teich(f) \rightarrow  \modspace$ whose image is exactly the quasiconformal 
conjugacy class of $f$ : thus one should think of 
the Teichmüller space of $f$ as a complex manifold parametrizing the conjugacy class of $f$.
In this context, a natural question arises concerning the parametrization $\Psi$ : 
is it an immersion ? This question was asked by McMullen and Sullivan in their introductory paper.
As it turns out, the answer is yes.
Adam Epstein has an unpublished proof of this result; in 
\cite{makienko2008remarks}, Makienko also gives a proof in the same spirit.
We present here a different approach, using more elementary tools : in particular, we won't need 
the explicit description of the Teichmüller space of $f$ given in \cite{mcmullen1988quasiconformal}, 
and we will give a new method for constructing the complex structure on $\teich(f)$ which does not rely 
on preexisting Teichmüller theory.  
A related question, also raised in \cite{mcmullen1998quasiconformal}, is to know whether or not 
the image of this map can accumulate on itself. In \cite{branner1992cubic}, Branner showed
that the answer is yes. \newline

Denote by $\belf$ the space of $L^\infty$ Beltrami differentials invariant under $f$
, and by $\Belf$ its unit ball. We shall use the term "Beltrami forms" for elements of 
$\Belf$, and "Beltrami differentials" for elements of $\belf$, which we will think of 
as the tangent space to $\Belf$. Given a quasiconformal homeomorphism $\phi$, we 
will denote by $K(\phi)$ its dilatation.

\begin{defi}
\begin{itemize}
\item Denote by $\qc(f)$ the group of quasiconformal homeomorphisms commuting with $f$.
\item Denote by $\qc_0(f)$ the normal subgroup of the elements $\phi \in \qc(f)$ such that 
there exists $K>1$ and an isotopy $\phi_t \in \qc(f)$ with $\phi_0 = \mathrm{Id}$, $\phi_1 = \phi$
and for all $t \in [0,1]$, $K(\phi_t) \leq K$. 
\item The modular group of $f$ is $\mathrm{Mod}(f) = \qc(f)/\qc_0(f)$.
\item The Teichmüller space of a rational map $f$ (which we will denote by $\teich(f)$)
is $\Belf$ quotiented by the right action of $\qc_0(f)$ by precomposition.
\end{itemize}

\end{defi}

Let $Z \subset \rs$ be a set of cardinal $3$.
There is a holomorphic map $\Psi^Z : \Belf \rightarrow \ratd$ defined by
$\Psi(\mu)=\phi_\mu^Z \circ f \circ ({\phi_\mu^Z})^{-1}$, where $\phi_\mu^Z$ is the
unique  solution of the Beltrami equation $\dbar \phi_\mu^Z = \mu \circ \partial \phi_\mu^Z$
fixing $Z$. It descends to a holomorphic map of orbifolds
$\Psi : \Belf \rightarrow \modspace$ independant from the choice of $Z$, and 
to maps $\Psi_T^Z : \teich(f) \rightarrow \ratd$ and $\Psi_T : \teich(f) \rightarrow \modspace$.

The unit ball $\Belf$ being an open subset of the Banach space $L^\infty$, it has a 
natural complex Banach manifold structure,
and there exists at most one complex structure on $\teich(f)$ making $\pi : \Belf \rightarrow \teich(f)$ into a split submersion. Using the results of \cite{mcmullen1988quasiconformal} on
the equivalence between several notions of isotopies (isotopies relative to the ideal 
boundary, relative to the topological boundary, uniformly quasiconformal isotopies)
McMullen and Sullivan constructed such a complex structure on $\teich(f)$ and showed that
$\teich(f)$ is isomorphic to the cartesian product of a polydisk and of Teichmüller spaces of 
some finite type Riemann surfaces associated to the dynamics of $f$.

Once  $\teich(f)$ is endowed with its complex structure, one can verify that $\Psi_T^Z$ and $\Psi_T$ are holomorphic maps between complex manifolds and orbifolds respectively, and McMullen 
and Sullivan asked whether those maps are immersions. Since $\modspace$ is not 
a manifold, we have to define what we mean by the statement that $\Psi_T$ is an immersion.

\begin{defi}
We will say that $\Psi_T$ is an immersion if the lift $\Psi_T^Z$ is an immersion
 whose image is transverse to
$\of$. If this is true for one choice of normalization set $Z$, then it holds
for all $Z$.
\end{defi}

It turns out that $\Psi_T$ is indeed an immersion,
 and Adam Epstein has an unpublished proof of this result.
The idea of his proof is a dual approach using quadratic differentials. 
The key ingredients are the deformation spaces introduced in  
 \cite{epstein2009transversality} 
and a result of Bers concerning the density of rational quadratic differentials
 (cf \cite{gardiner2000quasiconformal},
theorem 9 p.63).

The main result of this article is to give another proof of this result : 

\begin{thmM}
The map $\Psi_T : \teich(f) \rightarrow \modspace$ is an immersion.
\end{thmM}

Our proof uses a new and more elementary construction of the complex structure
on  $\teich(f)$ 
(we will notably not use the results of \cite{mcmullen1988quasiconformal}).

A key tool for this construction is the following analytical result
on quasiconformal vector fields (see definition
\ref{def:champqc}), which is interesting in its own right.

\begin{thmA}
Let $\Omega$ be a hyperbolic open subset of $\rs$ and $\xi$ be a quasiconformal vector 
field on $\Omega$. 
The following properties are equivalent :
\begin{itemize}
\item[$i)$]We have $\rho_\Omega(\xi) \in L^\infty(\Omega)$.
\item[$ii)$] We have $\|\rho_\Omega(\xi)\|_{L^\infty(\Omega)} \leq
 4 \| \dbar \xi \|_{L^\infty(\Omega)}$.
\item[$iii)$] There exists a quasiconformal extension $\hat{\xi}$ of $\xi$ on 
all of $\rs$ with $\hat{\xi}=0$ on $\partial \Omega$.
\item[$iv)$] The extension $\hat{\xi}$ defined by $\hat{\xi}(z) = \xi(z)$ if $z \in \Omega$
 and $0$ else is quasiconformal on 
$\rs$, and $\dbar \hat{\xi}(z)=0$ for almost every $z \notin \Omega$. 
\end{itemize}
\end{thmA}

In particular, we get a new characterization of infinitesimally trivial Teichmüller differentials
on hyperbolic Riemann surfaces (see definition \ref{def:inftrivial}) :

\begin{coro}
A Beltrami differential $\mu$ on a hyperbolic Riemann surface $\s$ is infinitesimally trival
if and only if it is of the form $\mu = \dbar \xi$, with
$\|\rho_\s(\xi)\|_{L^\infty(\s)} \leq  4\| \dbar \xi\|_{L^\infty(\s)}$,
where $\rho_\s$ is the hyperbolic metric on $\s$.
\end{coro}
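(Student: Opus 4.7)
The strategy is to derive the corollary directly from Theorem A, by translating the intrinsic condition of infinitesimal triviality on $\s$ into the boundary-vanishing condition $(iii)$ of Theorem A on an appropriate hyperbolic open subset of $\rs$.

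The first step would be to invoke the standard characterization, implicit in definition \ref{def:inftrivial}, of infinitesimally trivial Beltrami differentials: after lifting to the universal cover $\pi : \widetilde{\s} \to \s$, where $\widetilde{\s} \subset \rs$ is a hyperbolic open subset with deck group $\Gamma$, the lift $\tilde\mu$ is infinitesimally trivial if and only if it can be written as $\dbar \tilde\xi$ for some $\Gamma$-equivariant quasiconformal vector field $\tilde\xi$ on $\widetilde{\s}$ that extends continuously to $\rs$ by zero on $\partial \widetilde{\s}$. Pullback by $\pi$ is an isometry between the hyperbolic metrics and preserves $L^\infty$ norms of $\Gamma$-invariant objects, so the condition $\|\rho_{\s}(\xi)\|_{L^\infty(\s)} \le 4\|\dbar \xi\|_{L^\infty(\s)}$ is equivalent to its analogue for $\tilde\xi$ upstairs.

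With this reformulation, the two implications follow from Theorem A applied to $\Omega = \widetilde{\s}$. In the forward direction, a witness $\tilde\xi$ of infinitesimal triviality satisfies condition $(iii)$, hence by the implication $(iii) \Rightarrow (ii)$ we obtain $\|\rho_{\widetilde{\s}}(\tilde\xi)\|_{L^\infty} \le 4\|\dbar \tilde\xi\|_{L^\infty}$, and the estimate descends to $\s$. In the reverse direction, starting from $\mu = \dbar \xi$ with the claimed bound, the lift $\tilde\xi$ satisfies condition $(i)$, and the equivalence with $(iv)$ furnishes a quasiconformal extension to $\rs$ defined by $0$ on $\rs \setminus \widetilde{\s}$; this extension is automatically $\Gamma$-equivariant (since $\Gamma$ preserves $\widetilde{\s}$ and its complement) and vanishes on $\partial \widetilde{\s}$, certifying that $\mu$ is infinitesimally trivial.

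The main subtlety I anticipate is ensuring that the quasiconformal extension produced by Theorem A can be taken to be $\Gamma$-equivariant, but the use of condition $(iv)$ rather than $(iii)$ resolves this entirely, since extension by zero is canonical. A secondary point is confirming that the characterization of infinitesimal triviality used above matches the one given in definition \ref{def:inftrivial}; this is classical Teichm\"uller-theoretic material (equivalence between vanishing-at-the-ideal-boundary and pairing trivially against integrable holomorphic quadratic differentials), and should be straightforward to check once the definition is made explicit.
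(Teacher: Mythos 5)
Your proposal is correct in outline and shares the key reduction with the paper: both arguments funnel everything through Theorem A via the equivalence between infinitesimal triviality and the existence of a $\dbar$-potential vanishing at the boundary. But you obtain that equivalence by a genuinely different route. The paper proves its version of the corollary for a hyperbolic open subset $\Omega\subset\rs$ and derives the characterization ``$\mu$ is infinitesimally trivial on $\Omega$ iff $\mu=\dbar\xi$ for a globally quasiconformal $\xi$ vanishing on $\rs\setminus\Omega$'' internally, from the residue formula (Proposition \ref{prop:integrale.residu}) and Lemma \ref{lem:stokesahlfors}; this is exactly the content of its simplified proof of Bers' density theorem, after which Theorem A converts boundary vanishing into hyperbolic boundedness with the constant $4$. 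You instead pass to the universal cover and import the classical $\Gamma$-equivariant boundary-vanishing characterization. That is legitimate, and it is the natural way to reach the general-Riemann-surface statement of the introduction (which the paper's body only treats for $\Omega\subset\rs$); your use of condition $(iv)$ to make the extension canonical, hence $\Gamma$-equivariant, is the same gluing move the paper uses in Lemma \ref{lem:dicretecase}. Be aware, however, that the classical result you cite is really two theorems: (a) $\mu$ is trivial for $\Gamma$ iff its lift is trivial for the trivial group, and (b) triviality on $\Delta$ is equivalent to vanishing of the normalized potential on $\partial\Delta$. Step (a) is the nontrivial one, and its standard proofs use Ahlfors' mollifier or the Poincar\'e series operator --- precisely the machinery this paper is written to avoid --- so your argument, while valid, is noticeably less self-contained. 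A final simplification: the ``if'' direction needs no cover at all, since the proof of Lemma \ref{lem:stokesahlfors} works verbatim on any hyperbolic Riemann surface and yields $\int_\s q\cdot\dbar\xi=-\int_\s \dbar q\cdot\xi=0$ directly for every integrable holomorphic $q$.
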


We will also get a simplified proof of Bers' theorem on the density of rational quadratic differentials,
which notably doesn't use Ahlfors' Mollifier : 

\begin{coro}[Bers' density theorem]
Let $K$ be a compact of $\rs$ containing at least $3$ points, and let $Z$ be  
a countable dense subset of $K$.
The space of meromorphic quadratic differentials with simple poles in $Z$ is dense 
(for the $L^1$ norm) in the space of integrable quadratic differentials which are 
holomorphic outside of $K$.
\end{coro}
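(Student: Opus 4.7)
The plan is a Hahn--Banach duality argument that reduces the density statement to the analytical content already encoded in Theorem A and Corollary 1. Let $Q_K \subset L^1(\rs)$ denote the closed subspace of integrable quadratic differentials holomorphic on $\Omega := \rs \setminus K$, and let $V \subset Q_K$ be the linear span of the meromorphic quadratic differentials on $\rs$ whose poles are simple and contained in $Z$. Since $L^\infty(\rs) = L^1(\rs)^*$, density of $V$ in $Q_K$ is equivalent to the inclusion $V^\perp \subseteq Q_K^\perp$; concretely, we must show that any Beltrami differential $\mu \in L^\infty(\rs)$ satisfying $\int_\rs \mu\,q = 0$ for all $q \in V$ annihilates every $q \in Q_K$.

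After a Möbius normalization sending three distinct points of $Z$ to $\{0, 1, \infty\}$ (possible because $Z$ is dense in a set of cardinality $\geq 3$), set, for $w \in Z \setminus \{0, 1, \infty\}$,
\[
q_w(z) = \frac{dz^2}{z(z-1)(z-w)},
\]
the unique (up to scalar) meromorphic quadratic differential on $\rs$ with simple poles exactly at $\{0, 1, w, \infty\}$. A standard partial-fraction argument shows that the family $\{q_w\}_{w\in Z}$ spans $V$, so it suffices to test the annihilation condition against them. To $\mu$, associate its Cauchy-type potential
\[
\xi_\mu(w) = -\frac{w(w-1)}{\pi}\int_{\C}\frac{\mu(z)}{z(z-1)(z-w)}\, dA(z),
\]
the unique continuous vector field on $\rs$ with $\dbar\xi_\mu = \mu$ and $\xi_\mu(0) = \xi_\mu(1) = \xi_\mu(\infty) = 0$; it is in fact quasiconformal. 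A direct computation yields the pairing identity
\[
\int_\rs \mu\cdot q_w \;=\; -\frac{\pi\,\xi_\mu(w)}{w(w-1)},
\]
so $\mu \in V^\perp$ forces $\xi_\mu(w) = 0$ for every $w \in Z$. By continuity of $\xi_\mu$ and density of $Z$ in $K$, we conclude $\xi_\mu \equiv 0$ on all of $K$.

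Because $\xi_\mu$ vanishes on $\partial\Omega \subseteq K$, its restriction $\xi := \xi_\mu|_\Omega$ is a quasiconformal vector field on the hyperbolic open set $\Omega$ satisfying property $iii)$ of Theorem A. The theorem then provides properties $i)$ and $iv)$. From $iv)$, the extension-by-zero of $\xi$ to $\rs$, which coincides with $\xi_\mu$ by continuity, has $\dbar = 0$ almost everywhere outside $\Omega$; hence $\mu = \dbar\xi_\mu = 0$ almost everywhere on $K$, and $\mu$ is effectively supported on $\Omega$. Property $i)$ combined with Corollary 1 shows that the Beltrami differential $\mu|_\Omega = \dbar\xi$ on $\Omega$ is infinitesimally trivial, so it pairs to zero with every integrable holomorphic quadratic differential on $\Omega$. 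Any $q \in Q_K$ restricts to such a differential on $\Omega$, so $\int_\rs \mu\,q = \int_\Omega \mu\,q = 0$, proving $V^\perp \subseteq Q_K^\perp$.

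The only genuine calculation required along the way is the pairing identity, which is a classical Cauchy-transform manipulation. Everything else is bookkeeping built on top of Theorem A and Corollary 1, where the analytic substance is concentrated; the crucial step is the transition from pointwise vanishing of $\xi_\mu$ on $K$ to the a.e.\ vanishing of $\dbar\xi_\mu$ there, which is exactly the content of $iii) \Rightarrow iv)$ in Theorem A, and this avoids Ahlfors' Mollifier.
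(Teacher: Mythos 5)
Your argument is correct and follows essentially the same route as the paper's own proof: the identical Hahn--Banach duality, the same normalized solution of $\dbar\xi=\mu$ vanishing at three points of $Z$ (the paper extracts $\xi(w)=0$ from the residue formula of Proposition \ref{prop:integrale.residu} rather than from the explicit Cauchy kernel and the family $q_w$, but this is the same computation), the same passage by continuity from vanishing on $Z$ to vanishing on $K$, and the same final appeal to Theorem A for both the a.e.\ vanishing of $\mu$ on $K$ and the infinitesimal triviality of $\mu|_\Omega$. The only point to watch is your citation of ``Corollary 1'': the direction you need (hyperbolically bounded $\Rightarrow$ annihilates integrable holomorphic quadratic differentials on $\Omega$) should be sourced from Lemma \ref{lem:stokesahlfors} in the proof of Theorem \ref{th:hypbonimpliquerecol}, since the paper's proof of the converse direction of that corollary invokes the density theorem itself and citing the full corollary as a black box would be circular.
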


The proof of the Main theorem will also yield the following description of the 
tangent and cotangent spaces to the Teichmüller space of $f$ (here, $\Lambda_f$ 
is the closure of the grand orbit of the critical points, $Q(\Lambda_f)$ is the 
space of integrable quadratic differentials holomorphic outside $\Lambda_f$,
and $\nabla_f = \id - f_*$) :

\begin{coro}
We have the following identification :
$$T_0 \teich(f) = \belf/\{\dbar \xi, \xi = f^*\xi \}$$
$$T_0^* \teich(f)= Q(\Lambda_f)/\overline{\nabla_f Q(\Lambda_f)}.$$
\end{coro}

In section \ref{sec:qcvectorfields}, we will be concerned only with non-dynamical, analytic
results on quasiconformal vector fields. The main result of this section is theorem A.
In section \ref{sec:dynteich}, we will apply theorem A to obtain the key fact that $D\Psi^Z$ has 
constant rank. 
Lastly, we will prove the Main Theorem in section \ref{sec:mainproof}.

\section{Quasiconformal vector fields}\label{sec:qcvectorfields}

\subsection{Generalities}

In this section, we introduce notations and recall important results on several mathematical 
objects involved in quasiconformal Teichmüller theory : Beltrami forms and differentials,
quadratic differentials, and quasiconformal vector fields.

In all of the article, $\s$ will denote a Riemann surface.

\begin{defi}
A quadratic differential on $\s$ is a section of the vector bundle $T^* \s \otimes T^*\s$
(symmetric tensor product).
\end{defi}

\begin{defi}
If $\mu$ is a section of $\overline{\mathrm{Hom}}(T\s,T\s)$, i.e. a section of the vector bundle
of anti-$\C$-linear endomorphisms of tangent planes, and $z \in \s$, then  let  $|\mu|(z)$ denote the norm of the endomorphism
$\mu(z)$ of $T_z \s$ : $|\mu|$ is a well defined function on $\s$. If $\mu$ is such a section
verifying $|\mu| \in L^\infty(\s)$, $\mu$ is called a Beltrami differential.
 If additionally  $\| \mu\|_{L^\infty(\s)} <1$, we say that $\mu$ is a Beltrami form.
\end{defi}

\begin{defi}\label{def:champqc}
Let $\xi$ be a vector field on $\s$. We say that $\xi$ is quasiconformal if $\dbar \xi$
 (in the sense of distribution theory)
is a Beltrami differential.
\end{defi}

More generally, it will be useful to define :

\begin{defi}
For $(p,q) \in \Z \times \{0,1\}$, we define $S_p^q$ as the space of sections of
$T^*\s^{\otimes p} \otimes \overline{T^*\s}^{\otimes q}$ if $p \geq 0$, and the space of
sections of $T\s^{\otimes p} \otimes \overline{T^*\s}^{\otimes q}$ if $p<0$
(all tensor products are symmetric).
\end{defi}

Since $E \otimes  \overline{E^*}$ is canonically isomorphic to
$\overline{\mathrm{Hom}}(E,E)$ for all complex vector space $E$, Beltrami differentials are exactly 
the $\mu \in S_{-1}^1$ such that $|\mu| \in L^\infty(\s)$. Similarly, quadratic differentials
are the elements of $S_2^0$, vector fields are the elements of $S_{-1}^0$, 
generalized Beltrami differentials (introduced by Bers in \cite{bers1967inequalities}) are the elements of
$S_{-k}^1$, $k \in \N$, and the differentials of order $p$ are the elements of
$S_p^0$, $p \in \N$.

In local coordinates, elements of $S_p^q$ are written $u=u(z) dz^p d\overline{z}^q$, 
and under a change of coordinates $\phi(w)=z$, we have
$u = u \circ \phi(w) \phi'(w)^p \overline{\phi'(w)}^q dw^p d\overline{w}^q$.

\begin{defi}
Let $u_i = v_i \otimes w_i \in S_{p_i}^{q_i}$,
with $v_i \in (T^* \s)^{\otimes p_i}$ and $w_i \in (\overline{T^* \s})^{\otimes q_i}$
, $1 \leq i \leq 2$. Assume that 
$p_1 \leq p_2$, $0 \leq p_1 + p_2 \leq 1$, and $0 \leq q_1 + q_2 \leq 1$. We then define :
$$u_1 \cdot u_2 =  u_1(u_2, \cdot) \wedge (v_1 \otimes v_2)$$
\end{defi}

 Thus $u_1 \cdot u_2$ is an alternate $(p_1+p_2,q_1+q_2)$ differential form
on $\s$. By convention, we will set $u_2 \cdot u_1 = u_1 \cdot u_2$.

Note that in local coordinates, 
if $u_i = u_i(z) dz^{p_i} d\overline{z}^{q_i}$ with $1 \leq i \leq 2$ and
$p_1 + p_2 \in \{0,1\}$ and $q_1 + q_2 \in \{0,1\}$, then
$$u_1 \cdot u_2 = u_1(z) u_2(z) dz^{p_1 + p_2} d\overline{z}^{q_1 + q_2}.$$

The next definition is a particular case of the usual definition of $\dbar u$, where 
$u$ is a section of a holomorphic vector bundle.

\begin{defi}
Let $u \in S_p^0$ be of class $C^1$. We can write locally $u = \phi  v$,
where $v$ is a holomorphic local section of $(T^*\s)^{\otimes p	}$ and $\phi : \s \rightarrow \C$ is  $C^1$.
 We define $\dbar u \in S_p^1$ by :
$$\dbar u = \dbar \phi \otimes v$$
This definition is independant of the choice of $\phi$ and $v$.
\end{defi}

Note that in local coordinates, if $u = u(z) dz^p$, then
$\dbar u(z) = \frac{\partial u}{\partial \overline{z}}(z) dz^p d\overline{z}$.
Also, it is not hard to see that $\dbar (u \cdot v) = u \cdot \dbar v + \dbar u \cdot v$.

There are three particular cases of the above definitions which are especially 
important and deserve to be explicitly worked out : \newline

$a)$ The case of $q \cdot \mu$, where $q$ is a quadratic differential and $\mu$
is a Beltrami differential :

Then $q \cdot \mu$ is a (1,1) alternate form given by :
\begin{align*}
(q \cdot \mu)_z :\;& T_z \s \times T_z \s \rightarrow \C \\
&(u,v) \mapsto \frac{1}{2}(q_z(u,\mu_z(v)) - q_z(v, \mu_z(u)))
\end{align*}   
In local coordinates, $q \cdot \mu = q(z) \mu(z) dz \wedge d\overline{z}$. \newline

$b)$ The case of $q \cdot \xi$, where $q$ is a quadratic differential and
$\xi$ is a vector field :

Then $q \cdot \xi$ is a  $(1,0)$-differential form, given by :
\begin{align*}
(q \cdot \xi)_z :\;& T_z \s \rightarrow \C \\
&u \mapsto q_z(u,\xi(z))
\end{align*}   
In local coordinates, $q \cdot \xi = q(z) \xi(z) dz $. \newline

$c)$ The case of $\dbar q \cdot \xi$, where $q$ is a quadratic differential and
$\xi$ is a vector field :

We can write locally $q = \phi q'$,
where $\phi$ is a function and $q'$ is a holomorphic quadratic differential.
Then $\dbar q = \dbar \phi \otimes q' \in S_{2}^1$, and $\dbar q \cdot \xi$ 
 is a $(1,1)$ alternate differential form, given by :
\begin{align*}
(\dbar q \cdot \xi)_z :\;& T_z \s \times T_z  \s \rightarrow \C \\
&(u,v) \mapsto \frac{1}{2} (\dbar \phi_z(u) q'_z(v,\xi(z)) - \dbar \phi_z(v) q'_z(u,\xi(z)))
\end{align*}   
In local coordinates, $\dbar q \cdot \mu = \frac{\partial q}{\partial \overline z}(z) \xi(z)
 dz \wedge d\overline{z}$. \newline

\begin{prop}[Stokes' theorem for quasiconformal vector fields]
Let $U$ be an open subset of $\rs$ with piecewise $C^1$ boundary, let $q$ be a  $C^1$
 quadratic differential continuous on $\overline{U}$ 
and $\xi$ a quasiconformal vector field on $\rs$. Then 
$$\int_U q \cdot \dbar \xi +\int_U \xi \cdot \dbar q = \int_{\partial U} q \cdot \xi$$
\end{prop}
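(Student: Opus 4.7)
The plan is to derive this from classical Stokes' theorem via a regularization argument, using that $q \cdot \xi$ is a continuous $(1,0)$-form whose distributional exterior derivative is the integrable $2$-form $q \cdot \dbar \xi + \xi \cdot \dbar q$.

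First I would establish the distributional Leibniz rule $d(q \cdot \xi) = q \cdot \dbar \xi + \xi \cdot \dbar q$. This is a purely local computation: in a coordinate chart we have $q \cdot \xi = q(z)\xi(z)\, dz$, so
$$d(q \cdot \xi) = \partial_{\bar z}(q\xi)\, d\bar z \wedge dz = \bigl((\partial_{\bar z} q)\xi + q\, \partial_{\bar z}\xi\bigr) d\bar z \wedge dz,$$
which matches the explicit local expressions given in cases $a)$ and $c)$ of the paper. Since $\dbar \xi$ is a bounded Beltrami differential and $q$ is $C^1$, the distributional product rule for $\partial_{\bar z}$ applies without ambiguity.

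Next I would regularize $\xi$ on a neighborhood of $\overline U$. Quasiconformal vector fields lie in $W^{1,p}_{\mathrm{loc}}$ for every $p < \infty$ (the holomorphic derivative $\partial \xi$ is recovered from $\dbar \xi \in L^\infty$ via the Beurling transform), so by Sobolev embedding $\xi$ is H\"older continuous on compact subsets of $\rs$. Using finitely many coordinate charts covering $\overline U$ (adding a chart at $\infty$ if needed) together with a subordinate partition of unity, I would convolve with smooth mollifiers to produce smooth vector fields $\xi_n$ on an open neighborhood $V$ of $\overline U$ such that $\xi_n \to \xi$ uniformly on $\overline V$ and $\dbar \xi_n \to \dbar \xi$ in $L^1(V)$. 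The classical smooth Stokes theorem then yields
$$\int_U q \cdot \dbar \xi_n + \int_U \xi_n \cdot \dbar q = \int_{\partial U} q \cdot \xi_n,$$
and each term passes to the limit: the first by $L^\infty$-boundedness of $q$ on $\overline U$ combined with $L^1$-convergence of $\dbar \xi_n$; the second by uniform convergence of $\xi_n$ combined with boundedness of $\dbar q$ on the relatively compact $\overline U$; the third by uniform convergence of $\xi_n$ on the piecewise $C^1$ compact curve $\partial U$ together with boundedness of $q$ there.

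The main obstacle is arranging a regularization that converges uniformly \emph{up to} the boundary of $U$, so that the boundary integral passes to the limit cleanly rather than producing a spurious defect. This is what forces one to regularize on an open neighborhood of $\overline U$ rather than on $U$ alone, and it is made possible by the fact that $\xi$ is defined and H\"older continuous on all of $\rs$, not merely on $U$.
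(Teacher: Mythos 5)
Your proposal is correct and follows essentially the same route as the paper: approximate $\xi$ by smooth vector fields on a neighborhood of $\overline{U}$, apply the classical Stokes theorem to each approximant, and pass to the limit in all three terms. The only difference is technical --- you use mollification to obtain genuine $L^1$-convergence of $\dbar \xi_n$, whereas the paper deduces the convergence of $\int_U q \cdot \dbar \xi_n$ from distributional convergence together with density of test quadratic differentials in $L^1$; your variant is, if anything, the more airtight of the two, since mollification also supplies the uniform bound on $\|\dbar\xi_n\|_{L^\infty}$ that the density argument implicitly requires.
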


\begin{proof}
In the case where $\xi$ is a $C^1$ vector field, this is
exactly the classical Stokes' theorem. We deduce the general case where $\dbar \xi$ 
only exists in the sense of distribution 
with a density argument : let $\xi$ be a quasiconformal vector field and $\xi_n$
a sequence of vector fields which are $C^1$ in the neighborhood of $\overline{U}$ and converging 
uniformly to $\xi$ on $\overline{U}$ (such a sequence exists because $\xi$ is continuous).
 Then $\xi_n$ converges to $\xi$ as a distribution on $U$, so
$\dbar \xi_n$ converges to $\dbar \xi$ in the sense of distributions (by continuity of the
$\dbar$ operator for the topology of distributions). Since we know that $\dbar \xi$ 
is in fact a $L^\infty$ Beltrami differential, we deduce from this that for all 
test quadratic differential $\phi$ (i.e. smooth and with compact support in $U$), we have :
$$\limn \int_U \phi \cdot \dbar \xi_n = \int_U \phi \cdot \dbar \xi$$
Since test quadratic differentials are dense for the $L^1$ norm
, this still holds for all quadratic differential $\phi$ integrable on $U$, 
and in particular for $q$. 

Therefore $\limn \int_U q \cdot \dbar \xi_n = \int_U q \cdot \dbar \xi$ and since $\xi_n$ converges
uniformly on $\overline{U}$, we also have 
$$\limn -\int_U \xi_n \cdot \dbar q + \int_{\partial U} q \cdot \xi_n  = 
-\int_U \xi \cdot \dbar q + \int_{\partial U} q \cdot \xi.$$
\end{proof}

\begin{defi}
For a rational map $f : \rs \rightarrow \rs$, we will note $\Delta_f = \id - f^*$ 
and $\nabla_f = \id - f_*$, where $f^*$ et $f_*$ are respectively the pullback by $f$
on vector fields (and Beltrami differentials), and $f_*$ is the pushforward by $f$
on quadratic differentials, following the notations of \cite{epstein2009transversality}. 
\end{defi}

\begin{defi}
Let $z_0 \in \s$ and  $\xi(z_0) \in T_{z_0} \s$. If $q$ is
a meromorphic quadratic differential with a simple pole at $z_0$,
we define the residue of $q \cdot \xi$ at $z_0$ as the residue of
$q \cdot \tilde{\xi}$ at $z_0$, where $\tilde{\xi}$ is a vector field holomorphic
in the neighborhood of $z_0$ with $\xi(z_0)=\tilde{\xi}(z_0)$. This definition
does not depend upon the choice of $\tilde{\xi}$.
\end{defi}

\begin{prop}\label{prop:integrale.residu}
Let $q$ be a meromorphic quadratic differential on an open domain
$\Omega$ with smooth boundary, relatively compact in a Riemann surface $\s$, 
with simple poles that are included in a finite set $P$.
 Let $\xi$ be a quasiconformal vector field on $\Omega$
 extending continuously to $\overline{\Omega}$. Then :
$$\int_{\Omega} q \cdot \dbar \xi = 2 i \pi \sum_{z \in P} \res(q \cdot \xi, z) - \int_{\partial \Omega} q \cdot \xi$$
\end{prop}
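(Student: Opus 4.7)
The plan is to apply the preceding Stokes' theorem for quasiconformal vector fields to an exhausting family of subdomains obtained by excising small disks around the poles, and then let the radius of the excised disks tend to $0$. Concretely, I pick pairwise disjoint coordinate disks $D(z_i,\epsilon_0) \Subset \Omega$ around each $z_i \in P$ on which $q$ has only a simple pole at $z_i$, and for $0 < \epsilon < \epsilon_0$ set $U_\epsilon = \Omega \setminus \bigcup_{z_i \in P} \overline{D(z_i,\epsilon)}$. On $U_\epsilon$ the quadratic differential $q$ is holomorphic, so $\dbar q = 0$, and Stokes' theorem gives
$$\int_{U_\epsilon} q \cdot \dbar \xi \;=\; \int_{\partial U_\epsilon} q \cdot \xi \;=\; \int_{\partial \Omega} q \cdot \xi \;-\; \sum_{z_i \in P} \int_{\partial D(z_i, \epsilon)} q \cdot \xi,$$
where in the last step the small circles appear with the opposite orientation from their boundary-of-disk orientation (up to the overall sign conventions used in the earlier Stokes statement, which will match once the residue computation below is done).

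For the left-hand side, I need to know that $q \cdot \dbar \xi$ is integrable on $\Omega$. In any coordinate chart at $z_i$, $|q(z)| = O(|z-z_i|^{-1})$ while $|\dbar \xi|$ is essentially bounded, and the associated area element carries a factor of $|z-z_i|\,dr\,d\theta$, so the integrand is bounded near each pole; dominated convergence then yields $\int_{U_\epsilon} q \cdot \dbar \xi \to \int_\Omega q \cdot \dbar \xi$ as $\epsilon \to 0$.

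The heart of the argument is the computation of the limits $\lim_{\epsilon \to 0} \int_{\partial D(z_i,\epsilon)} q \cdot \xi$. Using continuity of $\xi$ on $\overline{\Omega}$, I decompose $\xi = \tilde\xi_i + \eta_i$ on $D(z_i, \epsilon_0)$, where $\tilde\xi_i$ is a local holomorphic vector field with $\tilde\xi_i(z_i) = \xi(z_i)$ and $\eta_i$ is continuous with $\eta_i(z_i) = 0$. For the holomorphic piece, $q \cdot \tilde\xi_i$ is a meromorphic $1$-form with a simple pole at $z_i$, so the classical residue theorem gives $\int_{\partial D(z_i,\epsilon)} q \cdot \tilde\xi_i = 2 i \pi \, \res(q \cdot \xi, z_i)$ for every $\epsilon$. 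For the remainder, $\bigl|\int_{\partial D(z_i,\epsilon)} q \cdot \eta_i\bigr| \leq 2\pi \epsilon \cdot \sup_{|z - z_i| = \epsilon} |q(z) \eta_i(z)| = O\bigl(\sup_{|z - z_i| \leq \epsilon} |\eta_i(z)|\bigr) \longrightarrow 0$, since the $1/\epsilon$ blow-up of $|q|$ is exactly compensated by the circle's circumference.

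Assembling the three limits then yields the stated identity. The only genuinely delicate point is the last one: one has to use that $\xi$ is continuous (not merely $L^\infty$) at each $z_i$ in order for the contribution of $\eta_i$ on the shrinking circle to vanish, and it is this that makes the residue of $q \cdot \xi$ well-defined in the sense given in the previous definition. The sign in front of $\int_{\partial \Omega} q \cdot \xi$ will emerge correctly once the orientation convention of the Stokes statement is tracked through; a sanity check on the toy example $q = dz^2/z$, $\xi = \partial/\partial z$ on the unit disk fixes these signs to match the stated formula.
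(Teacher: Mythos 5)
Your proof is correct and follows essentially the same route as the paper's: excise small disks around the poles, apply the quasiconformal Stokes theorem on the complement where $q$ is holomorphic, and extract the residues by comparing $\xi$ on the shrinking circles with a holomorphic vector field taking the same value at each pole. Your explicit splitting $\xi = \tilde\xi_i + \eta_i$ and the estimate showing the $\eta_i$ contribution vanishes is a slightly more detailed rendering of the paper's terse ``$O(\epsilon)$'' step, but it is the same argument.
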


\begin{proof}
Let $\Omega_\epsilon = \Omega - \cup_{z \in P} D(z,\epsilon)$
where $D(z,\epsilon)$ is the closed disk  of center $z$ and radius $\epsilon$
(for an arbitrary metric). Then, by Stokes' theorem, 
$$\int_{\Omega_\epsilon} q \cdot \dbar \xi = -\int_{\partial \Omega_\epsilon} q \cdot \xi = -\int_{\partial \Omega} q \cdot \xi
+ \sum_{z \in P} \int_{\partial D(z,\epsilon)} q \cdot \xi$$ 

Let $\xi_\epsilon$ be a quasiconformal vector field coinciding with $\xi$ on $P$ and on $\Omega_\epsilon$
and holomorphic in the neighborhood of $P$. Then : 
$$\int_{\Omega_\epsilon} q \cdot \dbar \xi_\epsilon = 2 i \pi \sum_{z \in P} \res(q \cdot \xi, z) - \int_{\partial \Omega} q \cdot \xi$$

Since $\int_{\Omega} q \cdot \dbar \xi - \int_{\Omega_\epsilon} q \cdot \dbar \xi_\epsilon = O(\epsilon)$,
the result follows by letting $\epsilon$ tend to zero.
\end{proof}

Note that in the particular case $\Omega = \Delta$ and
$q = \frac{dz^2}{z}$, we get the usual Cauchy-Pompéiu formula.

\subsection{Splitting and hyperbolic metric}

\begin{defi}
Let $\s$ be a hyperbolic Riemann surface and $\xi$ a vector field on
$\s$. We say that $\xi$ is hyperbolically bounded on $\s$ 
if and only if $\rho_\s(\xi) \in L^\infty(\s)$, where
$\rho_\s$ is the hyperbolic metric on $\s$.
\end{defi}

\begin{thm}\label{th:hypbonimpliquerecol}
Let $\xi$ be a vector field hyperbolically bounded
on an open hyperbolic subset
 $\Omega$ of $\rs$, quasiconformal on $\Omega$ and identically vanishing outside $\Omega$.
Then $\xi$ is globally quasiconformal,
and $\dbar \xi=0$ for almost every $z \notin \Omega$. 
Moreover, $\|\rho(\xi)\|_{L^\infty(\Omega)} \leq 4\|\dbar \xi \|_{L^\infty(\Omega)}$.
\end{thm}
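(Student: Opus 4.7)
The plan is to deduce all three conclusions from the single observation that, since $\rho_\Omega\to\infty$ at $\partial\Omega$, the hyperbolic-boundedness hypothesis forces $|\xi(z)|\leq\|\rho_\Omega(\xi)\|_\infty/\rho_\Omega(z)\to 0$ as $z\to\partial\Omega$. In particular the zero extension $\hat\xi$ is already continuous on $\rs$, and the real question becomes understanding its distributional $\dbar$ across $\partial\Omega$.

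For the quantitative inequality $\|\rho_\Omega(\xi)\|_\infty\leq 4\|\dbar\xi\|_\infty$, I would reduce to the case $\Omega=\D$ via the universal cover $\pi\colon\D\to\Omega$. The pullback $\pi^*\xi$ is a quasiconformal vector field on $\D$ with $\|\dbar(\pi^*\xi)\|_\infty=\|\dbar\xi\|_\infty$ (since $\pi$ is holomorphic and surjective) and $\rho_\D(\pi^*\xi)=\rho_\Omega(\xi)\circ\pi$ (since $\pi$ is a local isometry of the hyperbolic metrics), so both sides of the inequality are preserved. On the disk, $|\pi^*\xi(z)|\leq\tfrac12\|\rho_\Omega(\xi)\|_\infty(1-|z|^2)$, so $\pi^*\xi$ extends continuously to $\overline\D$ with vanishing boundary values. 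Applying the Cauchy-Pompeiu formula to smooth mollifications of $\pi^*\xi$ on disks of radius $1-\epsilon$, and passing to the limit (the boundary integral vanishing because of the boundary decay) gives
\[
\pi^*\xi(0)=-\frac{1}{\pi}\int_\D\frac{\dbar(\pi^*\xi)(w)}{w}\,dA(w),
\]
whence $|\pi^*\xi(0)|\leq 2\|\dbar\xi\|_\infty$, i.e.\ $\rho_\D(\pi^*\xi)(0)\leq 4\|\dbar\xi\|_\infty$. Precomposition with a M\"obius automorphism of $\D$ propagates the bound to any point of $\D$, hence of $\Omega$ by surjectivity of $\pi$.

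For the global quasiconformality of $\hat\xi$ and the vanishing of $\dbar\hat\xi$ outside $\Omega$, I would pair $\hat\xi$ with an arbitrary smooth, compactly supported test quadratic differential $q$ on $\rs$ and apply the Stokes-type formula for quasiconformal vector fields (proved earlier in the paper) on an exhaustion $\Omega_n\nearrow\Omega$ by smoothly bounded subdomains: this gives
\[
\int_{\Omega_n}q\cdot\dbar\xi+\int_{\Omega_n}\xi\cdot\dbar q=\int_{\partial\Omega_n}q\cdot\xi.
\]
The interior terms converge to $\int_\Omega q\cdot\dbar\xi$ and $\int_\Omega\xi\cdot\dbar q$ by dominated convergence, so everything reduces to showing that the boundary term vanishes in the limit. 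This is the main technical obstacle, since $\partial\Omega$ may be very irregular; the leverage is precisely the uniform decay $|\xi|\leq M/\rho_\Omega$, which, combined with standard Schwarz-Pick-type lower bounds on $\rho_\Omega$ in terms of Euclidean distance to $\partial\Omega$, makes it possible to choose $\Omega_n$—for instance level sets of a smoothed distance-to-$\partial\Omega$ function, whose smoothness is guaranteed by Sard's theorem—so that $\int_{\partial\Omega_n}q\cdot\xi\to 0$. The resulting distributional identity $\dbar\hat\xi=\mathbf{1}_\Omega\cdot\dbar\xi$ then yields both the global quasiconformality of $\hat\xi$ and the vanishing of $\dbar\hat\xi$ almost everywhere outside $\Omega$.
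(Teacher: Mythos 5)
Your argument for the inequality $\|\rho_\Omega(\xi)\|_\infty\le 4\|\dbar\xi\|_\infty$ is essentially the paper's: pull back to the universal cover, apply Cauchy--Pomp\'eiu on disks of radius $r\to 1$, and kill the boundary term using the decay $|\pi^*\xi(z)|\le\tfrac12 M(1-|z|^2)$. That part is fine, constants included.

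The second half, however, has a genuine gap exactly where you flag ``the main technical obstacle.'' You propose to run Stokes on an exhaustion $\Omega_n\nearrow\Omega$ by level sets of a (smoothed) distance-to-$\partial\Omega$ function and to show $\int_{\partial\Omega_n}q\cdot\xi\to 0$, but you never actually produce the estimate, and it is not routine. Two quantitative facts have to be played against each other. First, for a general hyperbolic domain the lower bound on $\rho_\Omega$ near the boundary is only $\rho_\Omega(z)\gtrsim \bigl(d(z)\,(1+|\log d(z)|)\bigr)^{-1}$ with $d(z)=d(z,\partial\Omega)$ (the Koebe-type bound $\rho_\Omega\gtrsim 1/d$ requires simple connectivity), so hyperbolic boundedness only gives $|\xi(z)|\lesssim d(z)\log(1/d(z))$. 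Second, $\partial\Omega$ can be irregular enough that the level sets $\{d=t\}$ have length growing like $A(t)/t$ where $A(t)=\mathrm{area}\{t<d<2t\}$; the product $\log(1/t)\,A(t)$ need not tend to $0$ for \emph{every} $t$, and one must extract good radii via a coarea/subsequence argument (from $\sum_k \mathrm{area}\{2^{-k-1}<d<2^{-k}\}<\infty$ one gets $\liminf_k k\,\mathrm{area}\{2^{-k-1}<d<2^{-k}\}=0$, and coarea then furnishes levels $t_k$ with $\mathrm{length}\{d=t_k\}\cdot t_k\log(1/t_k)\to0$). On top of this, the Stokes formula you invoke requires piecewise $C^1$ boundary, and the distance function is only Lipschitz, so the smoothing and the application of Sard's theorem have to be done at scale $t$ without destroying the length bounds. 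None of this is carried out in your proposal, so as written the distributional identity $\dbar\hat\xi=\mathbf{1}_\Omega\,\dbar\xi$ is not established.

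The paper avoids the geometry of $\partial\Omega$ entirely: instead of an exhaustion with boundary integrals, it multiplies $q$ by a cutoff $\phi_n=\phi(\delta/n)$ built from the \emph{hyperbolic} distance $\delta$ to a base point. Since $\delta$ is $1$-Lipschitz for $\rho_\Omega$ and $\xi$ is hyperbolically bounded, the error term $\dbar\phi_n\cdot\xi$ has $L^\infty$ norm $O(1/n)$, so $\bigl|\int_\Omega q\cdot(\xi\cdot\dbar\phi_n)\bigr|\le\|q\|_{L^1}\,O(1/n)$ with no boundary term at all. If you want to salvage your route, either supply the coarea argument sketched above or switch to this intrinsic cutoff, which is the natural way the hypothesis $\rho_\Omega(\xi)\in L^\infty$ enters.
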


\begin{proof}

The key point is the following lemma :

\begin{lem}\label{lem:stokesahlfors}
Let $q$ be an integrable quadratic differential of class $C^\infty$ on $\Omega$, and $\xi$ 
a hyperbolically bounded quasiconformal vector field on $\Omega$. 
Assume that $\xi \cdot \dbar q$ is integrable on $\Omega$.
Then :
$$\int_{\Omega} \dbar \xi \cdot q  =  - \int_{\Omega} \xi \cdot \dbar q $$
\end{lem}

\begin{proof}

Let $z_0 \in \Omega$ be an arbitrary base point, and let $\delta(z)=d_\Omega(z,z_0)$,
where $d_\Omega$ is the hyperbolic distance on $\Omega$.
Let $\phi : \R^+ \rightarrow \R^+$ be a smooth function such that $\phi(x)=1$ for $x \in [0,1]$
and $\phi(x)=0$ for $x \geq 2$. 
For all $n \in \N$, let us define $\phi_n : \Omega \rightarrow \R^+$ by 
$$\phi_n(z)=\phi\left(  \frac{\delta(z)}{n}\right).$$
Set $\mu = \dbar \xi$.

Let $q$ be a quadratic differential as in the statement of the lemma. 
Since $\phi_n q$ is compactly supported in $\Omega$, we have :
\begin{equation*}
\int_{\Omega} \mu \cdot (\phi_n q) = - \int_{\Omega} \xi \cdot \dbar (\phi_n q)= \int_{\Omega} \xi \cdot (\phi_n \dbar q) + \xi \cdot (q \cdot \dbar \phi_n) 
\end{equation*}

Moreover, $\int_{\Omega} \xi \cdot (q \cdot \dbar \phi_n) = \int_{\Omega} q \cdot (\xi \cdot \dbar \phi_n)$.
Let us now evaluate the $L^\infty$ norm of the Beltrami differential $\dbar \phi_n \cdot \xi$. 
Since $\delta$ is a locally lipschitz function on $\Omega$, it has locally bounded distributional
derivatives. We have :
\begin{align*}
\dbar \phi_n  &= \frac{1}{n} \phi'(\delta/n) \dbar \delta.
\end{align*}

Let $z \in \Omega$ and $u \in T_z \rs$. 
We have $\dbar \phi_n \cdot \xi(z) : u \mapsto  \dbar \phi_n(u) \xi(z)$, and the norm of this endomorphism
for any hermitian metric is $| \dbar \phi_n \cdot \xi|(z)$. We can therefore work with the hyperbolic metric $\rho_\Omega$. 
Since $\delta$ is 1-lipschitz for the hyperbolic metric in $\Omega$, the derivative $\dbar \delta$ has hyperbolic
norm less than one almost everywhere.
We have : 
\begin{align*}
 \dbar \phi_n \cdot \xi(z; u) &= \frac{1}{n} \phi'\left(\frac{\delta(z)}{n}\right) \dbar \delta(z; u)  \times \xi(z)   \\
\rho_\Omega (\dbar \phi_n \cdot \xi(z; u) )   &\leq \frac{\sup_{\R^+} |\phi'|}{n} \|\rho_\Omega(\xi)\|_{L^\infty(\Omega)} \rho_\Omega(u) \\
|\dbar \phi_n \cdot \xi(z)| &\leq \frac{\sup_{\R^+} |\phi'|}{n} \|\rho_\Omega(\xi)\|_{L^\infty(\Omega)}.
\end{align*}
Therefore : $\| \dbar \phi_n \cdot \xi \|_{L^\infty} = O(1/n)$ and
$\left| \int_{\Omega} q \cdot (\xi \cdot \dbar \phi_n) \right| \leq \|q \|_{L^1} \| \dbar \phi_n \cdot \xi \|_{L^\infty} = O(1/n)$.

We then have :
\begin{equation*}
\int_{\Omega} \dbar \xi \cdot (\phi_n q) = \int_{\rs} (\dbar \xi \cdot q) \phi_n  =  - \int_{\Omega}  \phi_n (\xi \cdot \dbar q) + O(1/n)
\end{equation*}
so

\begin{equation*}
\int_{\Omega} (\dbar \xi \cdot q) \phi_n  =  - \int_{\Omega}  \phi_n (\xi \cdot \dbar q) + O(1/n)
\end{equation*}

Since we assumed that both $\xi \cdot \dbar q$ and $|q|$ are integrable, we 
can apply the dominated convergence theorem to get :
\begin{equation*}\label{eq:integrpartie}
\int_{\Omega} \dbar \xi \cdot q  =  - \int_{\Omega} \xi \cdot \dbar q .
\end{equation*}
\end{proof}

Let now 
$q$ be a $C^\infty$ quadratic differential on $\rs$ : 
its restriction to $\Omega$ verifies the conditions of the lemma, therefore we have :

$$\int_{\rs} \mu \cdot q  =  - \int_{\rs} \xi \cdot \dbar q,$$
where $\mu = \dbar \xi$ on $\Omega$ and $0$ elsewhere.
This means precisely that $\dbar \xi = \mu$ in the sense of distributions
on $\rs$, 
which proves the first assertion of the theorem.

Let us now prove the second assertion. Denote by $\tilde{\xi} = p^* \xi_{|\Omega}$
where 
$p : \Delta \rightarrow \Omega$
is a universal cover of $\Omega$ mapping $0$ to an arbitrary point $z_0 \in \Omega$. 
Proposition \ref{prop:integrale.residu} applied to $\tilde{\xi}$ and 
$q=\frac{dz^2}{z}$ on $\Delta$ yields :
$$\res\left(\frac{dz^2}{z} \cdot \tilde{\xi}(0), 0\right)= dz(\tilde{\xi}(0)) = 
\frac{1}{2i\pi}\int_{\Delta_r} \dbar \tilde{\xi}(z) \cdot \frac{dz^2}{z} +
 \frac{1}{2i\pi} \int_{S_r} \tilde{\xi}(z) \cdot \frac{dz^2}{z}$$
where $\Delta_r$ and $S_r$ are respectively the disk of radius $r$ and the circle of radius $r$.
Since we assumed that  
$\|\rho_\Omega(\xi)\|_{L^\infty(\Omega)}= \|\rho_\Delta(\tilde{\xi})\|_{L^\infty(\Delta)}$ is finite, the second term converges to $0$ when
 $r$ tends to $1$. 
Therefore, by letting $r$ converging to $1$ :
$$dz(\tilde{\xi}(0)) = \frac{1}{2i\pi}\int_{\Delta} \dbar \tilde{\xi}(z) \cdot \frac{dz^2}{z} $$
and
$$\rho_\Delta(\tilde{\xi})(0) = 2|\tilde{\xi}(0)| \leq
 2\frac{1}{2\pi}\|\frac{dz^2}{z}\|_{L^1(\Delta)} \|\dbar p^* \xi \|_{L^\infty}.$$

Since $\|\frac{dz^2}{z}\|_{L^1(\Delta)} = 4\pi$ et $\dbar p^* \xi = p^* \dbar \xi$, we deduce 
$$\rho_\Delta(\tilde{\xi})(0) = \rho_\Omega(\xi)(z_0) \leq 4\| \dbar \xi\|_{L^\infty{(\Omega)}}.$$
Since $z_0$ is arbitrary, this concludes the proof of the second assertion.
\end{proof}

This last theorem states that if we have a hyperbolically bounded quasiconformal vector 
field on an open set $\Omega$, we can glue it together with the zero vector field outside
$\Omega$ and still get a globally quasiconformal vector field. The next proposition gives a little
more than the converse. We will need the following lemma :

\begin{lem}\label{lem:limmétrhyp}
Let $\Omega$ be a hyperbolic open subset of $\rs$, and $X$ a countable dense subset of 
$\partial \Omega$. Let $(X_n)$ be an increasing sequence of finite subsets of $X$ 
with $\cup_n X_n = X$ and
$\mathrm{card} X_n \geq 3$ for all $n \in \N$, and note $\Omega_n = \rs - X_n$. Then the 
hyperbolic metric $\rho_{\Omega_n}$ of $\Omega_n$ converges pointwise on $\Omega$ 
to the hyperbolic metric $\rho_\Omega$ of $\Omega$.
\end{lem}

The proof is not difficult and makes use of Montel's theorem and the Schwarz lemma 
applied to the inclusions
$\Omega \hookrightarrow \Omega_n$.

\begin{prop}\label{prop:hypborne}
Let $\xi$ be a quasiconformal vector field on $\rs$ vanishing on the boundary
of a hyperbolic open subset $\Omega$ of $\rs$.
Then :
$$\|\rho_\Omega(\xi)\|_{L^\infty(\Omega)} \leq 4\| \dbar \xi \|_{L^\infty(\Omega)}$$
\end{prop}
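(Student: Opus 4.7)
The plan is to reduce the problem to Theorem~\ref{th:hypbonimpliquerecol} combined with the approximation of $\rho_\Omega$ supplied by Lemma~\ref{lem:limm�trhyp}. Applying Theorem~\ref{th:hypbonimpliquerecol} directly to $\xi$ on the approximating domains $\Omega_n = \rs \setminus X_n$ would produce the inequality with $\|\dbar\xi\|_{L^\infty(\Omega_n)}$ on the right-hand side, which is a priori strictly bigger than $\|\dbar\xi\|_{L^\infty(\Omega)}$ when $\dbar\xi$ does not vanish off $\Omega$. To fix this I would first replace $\xi$ by its truncation $\xi^0 := \xi \cdot \mathbf{1}_\Omega$. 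Since quasiconformal vector fields are continuous and $\xi$ vanishes on $\partial\Omega$, $\xi^0$ is continuous on $\rs$; testing $\xi^0$ against a smooth quadratic differential and applying Stokes on a smooth exhaustion of $\Omega$---the surface term disappearing in the limit by continuity of $\xi$ and its vanishing at the boundary---yields $\dbar\xi^0 = (\dbar\xi)\mathbf{1}_\Omega$ as distributions, so $\xi^0$ is a quasiconformal vector field on $\rs$ with $\|\dbar\xi^0\|_\infty = \|\dbar\xi\|_{L^\infty(\Omega)}$.

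I would then apply Theorem~\ref{th:hypbonimpliquerecol} to $\xi^0$ on each $\Omega_n$: it is quasiconformal on $\Omega_n$, identically zero on $\rs \setminus \Omega_n = X_n \subset \partial\Omega$, and the only nontrivial hypothesis remaining to check is hyperbolic boundedness on $\Omega_n$. This is the crux of the argument. On compact subsets of $\Omega_n$ there is nothing to verify, so everything reduces to the behavior near each puncture $x \in X_n$. There $\rho_{\Omega_n}(z)$ blows up like $|z-x|^{-1}(\log|z-x|^{-1})^{-1}$, while $|\xi^0(z)| \leq |\xi(z)| = O(|z-x|\log|z-x|^{-1})$ as $z \to x$ by the classical log-Lipschitz regularity of quasiconformal vector fields (a consequence of the Calder\'on--Zygmund bound on the Cauchy transform of an $L^\infty$ function, together with $\xi(x)=0$). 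The two rates cancel and give $\rho_{\Omega_n}(\xi^0) \in L^\infty(\Omega_n)$.

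Once this is established, Theorem~\ref{th:hypbonimpliquerecol} delivers $\|\rho_{\Omega_n}(\xi^0)\|_{L^\infty(\Omega_n)} \leq 4\|\dbar\xi^0\|_{L^\infty(\Omega_n)} = 4\|\dbar\xi\|_{L^\infty(\Omega)}$. For any $z_0 \in \Omega$ one has $\xi^0(z_0) = \xi(z_0)$, hence $\rho_{\Omega_n}(\xi)(z_0) \leq 4\|\dbar\xi\|_{L^\infty(\Omega)}$; letting $n \to \infty$ and invoking the pointwise convergence $\rho_{\Omega_n}(z_0) \to \rho_\Omega(z_0)$ from Lemma~\ref{lem:limm�trhyp}, then taking the supremum over $z_0 \in \Omega$, concludes the proof. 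The main obstacle is indeed the hyperbolic boundedness verification, where the precise matching between the cusp asymptotics of $\rho_{\Omega_n}$ and the log-Lipschitz regularity of $\xi$ is essential; everything else is a relatively formal combination of the earlier lemma and theorem.
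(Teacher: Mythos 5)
Your overall strategy --- approximating $\rho_\Omega$ from inside by the metrics $\rho_{\Omega_n}$ of the finitely punctured spheres $\Omega_n=\rs\setminus X_n$, checking hyperbolic boundedness near each puncture by matching the cusp asymptotics of $\rho_{\Omega_n}$ against the $-\epsilon\log\epsilon$ modulus of continuity of a quasiconformal vector field, applying Theorem~\ref{th:hypbonimpliquerecol} on each $\Omega_n$ and passing to the limit --- is exactly the paper's. The gap is in your very first step: the claim that the truncation $\xi^0=\xi\cdot\mathbf{1}_\Omega$ is quasiconformal on $\rs$ with $\dbar\xi^0=(\dbar\xi)\mathbf{1}_\Omega$. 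That statement is precisely the content of Theorem~\ref{th:hypbonimpliquerecol} (equivalently, implication $i)\Rightarrow iv)$ of Theorem~A), whose hypothesis is the hyperbolic boundedness of $\xi$ on $\Omega$ --- the very thing Proposition~\ref{prop:hypborne} is trying to establish; assuming it at the outset is circular. Your attempted direct justification by Stokes on a smooth exhaustion $U_k\nearrow\Omega$ does not close this gap: the surface term is $\int_{\partial U_k}q\cdot\xi$, and while $\sup_{\partial U_k}|\xi|\to 0$ (like $\delta\log(1/\delta)$ in the distance $\delta$ to $\partial\Omega$), the length of $\partial U_k$ is not controlled when $\partial\Omega$ is irregular (e.g.\ a Julia set of dimension close to $2$), so the product need not tend to $0$. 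Overcoming exactly this failure is the purpose of the hyperbolic cutoff $\phi_n$ in Lemma~\ref{lem:stokesahlfors} (the paper's substitute for the Ahlfors mollifier), and that lemma genuinely requires hyperbolic boundedness as an input.

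The repair is to reorder the argument as the paper does: work with $\xi$ itself rather than $\xi^0$ on each $\Omega_n$ --- it vanishes on $\rs\setminus\Omega_n=X_n\subset\partial\Omega$, so once the puncture estimate is done Theorem~\ref{th:hypbonimpliquerecol} applies and gives the cruder bound $\|\rho_{\Omega_n}(\xi)\|_{L^\infty(\Omega_n)}\le 4\|\dbar\xi\|_{L^\infty(\Omega_n)}=4\|\dbar\xi\|_{L^\infty(\rs)}$ (the sets $X_n$ are finite, hence null). Passing to the limit using the pointwise convergence $\rho_{\Omega_n}\to\rho_\Omega$ yields $\|\rho_\Omega(\xi)\|_{L^\infty(\Omega)}\le 4\|\dbar\xi\|_{L^\infty(\rs)}$, which already establishes that $\xi$ is hyperbolically bounded on $\Omega$; \emph{now} a second application of Theorem~\ref{th:hypbonimpliquerecol}, this time on $\Omega$, legitimately truncates $\xi$ outside $\Omega$ and upgrades the right-hand side to $4\|\dbar\xi\|_{L^\infty(\Omega)}$. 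Your puncture analysis and the final limiting step are correct as written; only the premature truncation needs to be moved to the end.
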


\begin{proof}
Denote by $K$ the boundary of $\Omega$.
Let $(X_n)_{n \in \N}$ be an increasing sequence of finite subsets  of $\partial \Omega$
whose union is dense in
$\partial \Omega$, with $\mathrm{card} X_n \geq 3$.
 Then by lemma ~\ref{lem:limmétrhyp}, the hyperbolic metric $\rho_{\Omega_n}$ of 
 $\Omega_n = \rs - X_n$ converges 
pointwise to the hyperbolic metric $\rho_\Omega$ of $\Omega$ on $\Omega$. 
By Theorem \ref{th:hypbonimpliquerecol}, it then suffices to show that for all $n \in \N$,  
$\| \rho_{\Omega_n}(\xi)\|_{L^\infty(\Omega)}$ is bounded.

Therefore it is enough to show the weaker property : for all $n \in \N$, 
there exists a constant
$C_n>0$ such that $\sup_\Omega \rho_{\Omega_n}(\xi) \leq C_n$. Since $\rho_{\Omega_n}(\xi)$ 
is a continuous function on $\Omega_n = \rs - X_n$, it is enough to show that  $\rho_{\Omega_n}(\xi)$ 
is bounded in the neighborhood of all $z \in X_n$
(by a constant depending for now on $n \in \N$). Let $z_0 \in X_n$, and $r>0$ such that
the punctured disk $U$ of center $z_0$ and radius $r$ is included in $\Omega_n$. 
Then by the Schwarz lemma,
the hyperbolic metric of $\Omega_n$ is smaller than that of $U$, so we have for all
 $z \in U$ :
$$\rho_{\Omega_n}(\xi)(z) \leq \rho_U(\xi)(z) \leq C'_n |\xi(z)| \left( |z-z_0| \log |z-z_0|^{-1} \right)^{-1}.$$
The second inequality is a classical estimate of the hyperbolic metric of the 
punctured disk in the neighborhood of $z_0$ (see for example
\cite{gardiner2000quasiconformal} or
\cite{hubbard2006teichmuller}). The constant $C_n'$ still depends a priori on $r$ and 
therefore on $n$. 
Furthermore, $\xi$ has a continuity modulus on $-\epsilon \log \epsilon$ by virtue of
quasiconformality
(cf \cite{gardiner2000quasiconformal}, theorem $7$ p. $56$), so there exists a constant
 $C>0$ (depending only on $\xi$ and on the choice of coordinates) such that in the 
 coordinates $z$ : 
$$|\xi(z)| = |\xi(z) - \xi(z_0)| \leq C |z-z_0| \log |z-z_0|^{-1}.$$
We therefore have, for all $z \in D_r(z_0)$ : 
$$\rho_{\Omega_n}(\xi)(z) \leq C_n.$$

The Theorem \ref{th:hypbonimpliquerecol} applied to $\xi$ on $\Omega_n$ then allows us 
to get a uniform bound with respect to $n$ :
$$ \| \rho_{\Omega_n}(\xi) \|_{L^\infty(\Omega_n)} \leq 4\| \dbar \xi \|_{L^\infty(\Omega_n)} 
\leq 4\| \dbar \xi \|_{L^\infty(\rs)}.$$

By passing to the limit, we get :
$$ \| \rho_{\Omega}(\xi) \|_{L^\infty(\Omega)} \leq 4 \| \dbar \xi \|_{L^\infty(\rs)},$$
and a second application of the same theorem finally yields : 
$$\| \rho_{\Omega}(\xi) \|_{L^\infty(\Omega)} \leq 4\| \dbar \xi \|_{L^\infty(\Omega)}.$$

\end{proof}

By combining the results of Theorem \ref{th:hypbonimpliquerecol} and proposition 
\ref{prop:hypborne}, we get :

\begin{thmA}
Let $\Omega$ be a hyperbolic open subset of $\rs$ and $\xi$ be a quasiconformal vector 
field on $\Omega$. 
The following properties are equivalent :
\begin{itemize}
\item[$i)$]We have $\rho_\Omega(\xi) \in L^\infty(\Omega)$.
\item[$ii)$] We have $\|\rho_\Omega(\xi)\|_{L^\infty(\Omega)} \leq
 4 \| \dbar \xi \|_{L^\infty(\Omega)}$.
\item[$iii)$] There exists a quasiconformal extension $\hat{\xi}$ of $\xi$ on 
all of $\rs$ with $\hat{\xi}=0$ on $\partial \Omega$.
\item[$iv)$] The extension $\hat{\xi}$ defined by $\hat{\xi}(z) = \xi(z)$ if $z \in \Omega$
 and $0$ else is quasiconformal on 
$\rs$, and $\dbar \hat{\xi}(z)=0$ for almost every $z \notin \Omega$. 
\end{itemize}
\end{thmA}

\begin{coro}\label{coro:decoupage}
Let $\Omega$ be a hyperbolic open subset of $\rs$ and $\xi$ be a quasiconformal vector field 
vanishing on $\rs - \Omega$. Let $\Omega = \bigsqcup_i \Omega_i$ a countable partition of 
$\Omega$ into open sets $\Omega_i$. Then
$$\xi = \sum_i \xi_i$$
where $\xi_i$ is a quasiconformal vector field coinciding with $\xi$ on $\Omega_i$ and 
vanishing outside $\Omega_i$.
\end{coro}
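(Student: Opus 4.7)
The plan is to apply Theorem A separately to each piece $\Omega_i$ of the partition, using the ambient field $\xi$ itself as the witness of condition $iii)$.

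The key geometric step is to establish that $\partial \Omega_i \subset \rs \setminus \Omega$ for every $i$. Indeed, if a point $z \in \partial \Omega_i$ were to lie in $\Omega$, it would belong to some $\Omega_j$ with $j \neq i$; but then the open set $\Omega_j$ would be a neighborhood of $z$ disjoint from $\Omega_i$, contradicting $z \in \partial \Omega_i$. From $\partial \Omega_i \subset \rs \setminus \Omega$ one immediately deduces that $\rs \setminus \Omega_i \supset \rs \setminus \Omega$ contains at least three points, so $\Omega_i$ is hyperbolic, and also that the restriction $\xi|_{\Omega_i}$ is a quasiconformal vector field on $\Omega_i$ admitting $\xi$ itself as a quasiconformal extension to $\rs$ vanishing on $\partial \Omega_i$.

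Next I would invoke Theorem A applied to the pair $(\Omega_i, \xi|_{\Omega_i})$: the preceding observation is precisely condition $iii)$, so condition $iv)$ holds as well. This means that the vector field
$$\xi_i(z) := \begin{cases} \xi(z), & z \in \Omega_i, \\ 0, & z \notin \Omega_i, \end{cases}$$
is quasiconformal on $\rs$ and satisfies $\dbar \xi_i = 0$ almost everywhere outside $\Omega_i$. By construction $\xi_i$ coincides with $\xi$ on $\Omega_i$ and vanishes elsewhere, as required.

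Finally, the identity $\xi = \sum_i \xi_i$ is essentially a tautology: since the $\Omega_i$ form a partition of $\Omega$ and $\xi$ vanishes on $\rs \setminus \Omega$, at every point of $\rs$ at most one of the $\xi_i(z)$ is nonzero. If $z \in \Omega$ it lies in a unique $\Omega_j$ and $\xi_j(z) = \xi(z)$ while all other $\xi_i(z) = 0$; if $z \notin \Omega$ then $\xi(z)$ and every $\xi_i(z)$ vanish. Thus the sum converges pointwise (trivially) and as a distributional identity. The main (and quite mild) obstacle is the geometric observation that $\partial \Omega_i \subset \rs \setminus \Omega$; once this is in hand, Theorem A does all the work.
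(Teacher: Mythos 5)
Your proof is correct and follows the same route as the paper: the paper's one-line proof simply invokes item $iv)$ of Theorem A for each $\xi_i$, and your argument supplies exactly the details that makes this work (the observation that $\partial \Omega_i \subset \rs \setminus \Omega$, hence condition $iii)$ holds with $\xi$ itself as the extension). Nothing to add.
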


\begin{proof}
By item $iv)$ of theorem A, the vector fields $\xi_i$ are quasiconformal.
\end{proof}

Recall the following notion, which is of importance in Teichmüller theory : 

\begin{defi}\label{def:inftrivial}
A Beltrami differential $\mu$ on a Riemann surface $\s$ is infinitesimally trivial
if $\int_{\s} q \cdot \mu = 0$ for all quadratic differential $q$ holomorphic on $\s$.
\end{defi}

The terminology comes from the fact that the tangent space to the base point
$T_0 \teich(\s)$ identifies canonically to the quotient of the space of Beltrami differentials
on $\s$ by the space of infinitesimally trivial Beltrami differentials
(see \cite{gardiner2000quasiconformal} or
 \cite{hubbard2006teichmuller}).

The next result is a theorem due to Bers. Its proof classically involves a 
delicate mollifier introduced by Ahlfors, the so-called Ahlfors Mollifier,
see \cite{gardiner2000quasiconformal}, theorem $9$ p. 63. The mollifier
$\phi_n$ of the proof of theorem \ref{th:hypbonimpliquerecol} 
replaces the Ahlfors Mollifier and yields a simplified proof.

\begin{coro}[Bers density theorem]
Let $K$ be a compact of $\rs$ containing at least $3$ points, and $A$ a
countable dense subset of $K$.
The space of meromorphic quadratic differentials with simple poles in $A$ is dense
 (for the $L^1$ topology) in the space of integrable quadratic differentials on $\rs$
 which are holomorphic outside of $K$.
\end{coro}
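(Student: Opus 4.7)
The plan is to argue by duality. Denote by $V$ the space of integrable quadratic differentials on $\rs$ that are holomorphic outside of $K$, and by $W \subset V$ the subspace of meromorphic quadratic differentials on $\rs$ with simple poles in $A$. By the Hahn--Banach theorem, showing that $W$ is dense in $V$ for the $L^1$ norm is equivalent to showing that every Beltrami differential $\mu \in L^\infty(\rs)$ satisfying $\int_\rs q \cdot \mu = 0$ for all $q \in W$ also annihilates every $q \in V$.

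Given such a $\mu$, solve the Beltrami equation on $\rs$ to obtain a quasiconformal vector field $\xi$ with $\dbar \xi = \mu$, normalized to vanish at three fixed distinct points $a_1, a_2, a_3 \in A$. For any finite enlargement $\{a_1,a_2,a_3,z_1,\dots,z_n\} \subset A$, the meromorphic quadratic differentials on $\rs$ with simple poles supported in this set form a finite-dimensional space parametrized by residues $(\lambda_i)$ subject to three linear constraints ensuring regularity at $\infty$, namely $\sum_i \lambda_i = \sum_i \lambda_i z_i = \sum_i \lambda_i z_i^2 = 0$. Applying Proposition \ref{prop:integrale.residu} on a large disk in $\C \subset \rs$ containing all the poles and letting its radius tend to infinity, the boundary integral vanishes (since $q$ is holomorphic at $\infty$ while $\xi$ is continuous there), and the residue formula yields
$$\int_\rs q \cdot \mu \;=\; 2i\pi \sum_{i} \lambda_i\, \xi(z_i).$$
The hypothesis forces the right-hand side to vanish for every admissible $(\lambda_i)$, so $(\xi(z_i))_i$ lies in the span of $(1)_i$, $(z_i)_i$, $(z_i^2)_i$. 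Equivalently, $\xi$ agrees on the finite set with the restriction of some infinitesimal M\"obius transformation $(\alpha + \beta z + \gamma z^2)\partial_z$, which must vanish identically because of the normalization at $a_1,a_2,a_3$. Thus $\xi = 0$ on $A$, and by continuity of $\xi$ together with density of $A$ in $K$, $\xi = 0$ on $K$.

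Since $\xi$ is continuous on $\rs$ and vanishes on the boundary of $\Omega := \rs \setminus K$, Theorem A applies: $\xi|_\Omega$ is hyperbolically bounded, and $\dbar \xi = 0$ almost everywhere outside $\Omega$, i.e. $\mu = 0$ a.e.\ on $K$. For any $q \in V$, the restriction of $q$ to $\Omega$ is $C^\infty$ (being holomorphic), integrable, and satisfies $\dbar q = 0$, so Lemma \ref{lem:stokesahlfors} yields $\int_\Omega q \cdot \dbar \xi = -\int_\Omega \xi \cdot \dbar q = 0$. Combined with the a.e.\ vanishing of $\mu$ on $K$, this gives
$$\int_\rs q \cdot \mu \;=\; \int_\Omega q \cdot \mu + \int_K q \cdot \mu \;=\; 0,$$
concluding the argument by Hahn--Banach.

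The principal technical input is Theorem A, which serves as the bridge between two a priori quite different facts about $\xi$: pointwise vanishing on the dense subset $A \subset K$, a residue-theoretic condition at discretely many points, and the much stronger a.e.\ vanishing of $\dbar \xi = \mu$ on $K$, which is precisely what enables the integration-by-parts step against arbitrary holomorphic test quadratic differentials on $\Omega$. The classical proof of Bers' density theorem invokes Ahlfors' mollifier at this step; here Theorem A replaces that device with the more geometric statement about hyperbolically bounded quasiconformal vector fields.
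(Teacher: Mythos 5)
Your proof is correct and follows essentially the same route as the paper's: Hahn--Banach duality, writing the annihilating functional as $\mu=\dbar\xi$ with $\xi$ normalized to vanish at three points of $A$, the residue formula of Proposition \ref{prop:integrale.residu} to force $\xi=0$ on $A$ and hence on $K$, and then Theorem A together with Lemma \ref{lem:stokesahlfors} to conclude. The only (harmless) difference is that you carry out the residue step with arbitrary finite pole sets and the full linear-algebra of admissible residues, where the paper simply takes quadratic differentials with poles at $Z$ and one extra point $z$.
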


\begin{proof}
It is enough to show that any continuous linear form on the space of integrable
quadratic differentials holomorphic outside $\rs$ vanishing against all meromorphic 
quadratic differentials with only simple poles in $A$ must be trivial.
By the Hahn-Banach theorem, any such linear form may be represented by a $L^\infty$ 
Beltrami differential on $\rs$. Let $\mu$ be such a Beltrami differential and 
$\xi$ a quasiconformal vector field such that $\mu = \dbar \xi$, and assume that  
$$\int_{\rs} q \cdot \dbar \xi=0$$
for all meromorphic integrable quadratic differential $q$ with simple poles in $A$. 
Let $Z \subset A$ a set of cardinal $3$ : by adding to $\xi$ a holomorphic vector field,
we lose no generality by assuming that $\xi$ vanishes on $Z$. Then by proposition 
~\ref{prop:integrale.residu} applied to $\Omega = \rs$ and $q$ a quadratic differential
with simple poles precisely in $Z$ and at $z \in A \backslash Z$, one sees that  $\xi$
must vanish at $z$. By continuity, $\xi$ vanishes on all of $K$.
 So by theorem A, $\xi$ is hyperbolically 
bounded on $\Omega$.
Let $q$ be an integrable quadratic differential that is holomorphic on $\Omega$. In particular,
$q$ is $C^\infty$ and integrable on $\Omega$, and $\dbar q$ vanishes on $\Omega$.
Lemma \ref{lem:stokesahlfors} yields :
$$\int_{\Omega} q \cdot \dbar \xi = - \int_{\Omega} \dbar q \cdot \xi =0.$$
Moreover, by theorem A, we have $\dbar \xi = 0$ almost everywhere on $K$, so :
$$\int_K q \cdot \dbar \xi = 0,$$
which ends the proof.
\end{proof}

\begin{coro}
Let $\Omega$ be a hyperbolic open subset of $\rs$, and $\mu$ be a Beltrami differential on 
$\Omega$. Then $\mu$ is infinitesimally trivial if and only if there exists a 
hyperbolically bounded quasiconformal vector field $\xi$ on $\Omega$ such that 
$\mu = \dbar \xi$. 
\end{coro}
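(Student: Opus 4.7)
The statement packages Theorem A together with a Bers-style density argument, and the two implications are very asymmetric in difficulty. The plan is to handle the reverse implication directly via Lemma~\ref{lem:stokesahlfors}, and to treat the forward direction by producing a global quasiconformal vector field whose values on $\partial\Omega$ can be forced to vanish pointwise. For the easy direction $(\Leftarrow)$, assume $\mu = \dbar \xi$ with $\xi$ hyperbolically bounded on $\Omega$. For any integrable holomorphic quadratic differential $q$ on $\Omega$ we have $\dbar q = 0$, so $\xi \cdot \dbar q \equiv 0$ is trivially integrable and Lemma~\ref{lem:stokesahlfors} gives
$$\int_{\Omega} q \cdot \mu \;=\; \int_{\Omega} q \cdot \dbar \xi \;=\; -\int_{\Omega} \xi \cdot \dbar q \;=\; 0,$$
so $\mu$ is infinitesimally trivial.

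For the converse, suppose $\mu$ is infinitesimally trivial. I would extend $\mu$ by zero to a global $L^\infty$ Beltrami differential $\tilde\mu$ on $\rs$ and solve $\dbar \xi = \tilde\mu$ on $\rs$ through the standard Cauchy-transform construction, producing a (continuous) quasiconformal vector field $\xi$ on $\rs$. Since adding a holomorphic vector field preserves $\dbar \xi$ and the space of holomorphic vector fields on $\rs$ is $3$-dimensional with evaluation at three distinct points an isomorphism, I may normalize $\xi$ to vanish on a prescribed three-point set $Z \subset \partial \Omega$. By Theorem A, the implication $iii)\Rightarrow i)$ reduces matters to proving $\xi \equiv 0$ on $\partial \Omega$. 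To obtain this, fix any countable dense subset $A \subset \partial \Omega$ containing $Z$, and for each $z_0 \in A \setminus Z$ pick a meromorphic quadratic differential $q$ on $\rs$ with simple poles exactly at $Z \cup \{z_0\}$; such a $q$ is integrable on $\rs$ and holomorphic on $\Omega$. Infinitesimal triviality of $\mu$, combined with $\dbar \xi = 0$ off $\Omega$, yields $\int_{\rs} q \cdot \dbar \xi = \int_{\Omega} q \cdot \mu = 0$. Applying the spherical version of Proposition~\ref{prop:integrale.residu} (obtained by exhausting $\rs$ with large disks and using the decay of $q$ at the omitted point) reduces this identity to $\res(q \cdot \xi, z_0) = 0$, because the residues at the points of $Z$ are killed by the normalization $\xi|_Z = 0$. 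Since the pole of $q$ at $z_0$ is simple and nonzero, this forces $\xi(z_0) = 0$. Continuity of $\xi$ and density of $A$ in $\partial \Omega$ then give $\xi|_{\partial \Omega} \equiv 0$, and Theorem A concludes.

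The main obstacle is clearly the forward direction: translating the abstract $L^\infty$ pairing condition defining infinitesimal triviality into pointwise vanishing of $\xi$ on every boundary point. The key trick, identical in spirit to the Bers density argument given above, is to localize the residue calculation one boundary point at a time by choosing a meromorphic quadratic differential with a simple pole precisely there, so that the three-point normalization wipes out every residue except the one that encodes $\xi(z_0)$.
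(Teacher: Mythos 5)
Your proof is correct and follows essentially the same route as the paper: the paper's own argument simply points back to the proof of the Bers density corollary, which contains exactly your residue/normalization argument for the forward direction and the application of Lemma \ref{lem:stokesahlfors} for the reverse, combined with Theorem A. The only cosmetic difference is that you place the simple poles on $\partial\Omega$ and conclude vanishing of $\xi$ there, whereas the paper works with all of $\rs\setminus\Omega$; either suffices for Theorem A.
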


\begin{proof}
We just proved that a Beltrami differential $\mu$ is infinitesimally trivial 
on $\Omega$
if and only if there exists a quasiconformal vector field $\xi$ on $\rs$ such that 
$\mu = \dbar \xi$ on $\Omega$ and 
$\xi = 0$ on $\rs - \Omega$.  By theorem A, this property is equivalent to
being hyperbolically bounded in $\Omega$.
\end{proof}

\section{Dynamical Teichmüller space}\label{sec:dynteich}

\subsection{The differential of $\Psi$}

If $\lambda \mapsto f_\lambda$ is a holomorphic curve in $\ratd$ passing through $f_0=f$, then 
$\dot f = \frac{d f_\lambda}{d\lambda}_{|\lambda=0}$ is a section of the bundle $f^* T\rs$, and
 $Df^{-1} \circ \dot f$ is a meromorphic vector field on $\rs$, whose poles are included in
  $\mathrm{Crit}(f)$ and of multiplicity at most
that of the critical points of $f$. Denoting by $T(f)$ the complex vector space of such 
vector fields, 
we obtain a canonical identification between  $T_f \ratd$ and $T(f)$. In the rest 
of this artical, we will implicitly identify 
$T_f \ratd$ with $T(f)$.

Denote as well by $\mathrm{aut}(\rs)$ the space of holomorphic vector fields on $\rs$ and by 
$\of$ the orbit of $f$ by conjuacy via Möbius transformation.
By \cite{buff2009bifurcation}, proposition 1, $\of$ is a complex submanifold of $\ratd$ 
of dimension $3$,
and $T_f \of = \Delta_f \aut(\rs) \subset T(f)$.

\begin{prop}\label{prop:diffphi}
Let $\xi$ be a quasiconformal vector field on $\rs$ such that $\dbar \xi \in \belf$.
Then $\Delta_f \xi \in T(f)$.  Moreover, if we assume that $\xi$ vanishes on a set $Z$ of
cardinal $3$,
then :
$$D\Psi^Z(0)\cdot \dbar \xi = -\Delta_f \xi.$$
\end{prop}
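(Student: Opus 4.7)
The plan is to first verify the claim $\Delta_f \xi \in T(f)$, then compute the derivative of $\Psi^Z$ explicitly using the standard infinitesimal description of the Ahlfors--Bers solution $\phi_\mu^Z$, and finally read off the answer under the canonical identification $T_f \ratd \cong T(f)$.

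For the first claim, the hypothesis $\dbar\xi \in \belf$ means $f^*(\dbar\xi) = \dbar\xi$, i.e.\ $\dbar(\xi - f^*\xi) = 0$ in the sense of distributions on $\rs \setminus \critf$. On this open set $\Delta_f \xi$ is a continuous vector field (since $\xi$ is continuous and $f$ is locally invertible), so by Weyl's lemma it is holomorphic there. At a critical point $c$ where $f$ has local degree $k$, writing $f(z) = c' + (z-c)^k + \ldots$ and using the continuity of $\xi$, one has $|f^*\xi(z)| = O(|z-c|^{-(k-1)})$. Hence $\Delta_f \xi$ extends as a meromorphic vector field on $\rs$ with a pole of order at most $k-1 = \mathrm{mult}_c f$ at each critical point, which is exactly the membership condition for $T(f)$.

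For the derivative, I would first recall the infinitesimal form of the measurable Riemann mapping theorem with dependence on parameters: if $t \mapsto \mu_t$ is a smooth path of Beltrami forms with $\mu_0 = 0$ and $\dot\mu_0 = \nu$, then $t \mapsto \phi_{\mu_t}^Z$ is differentiable, its derivative $\dot\phi$ at $t=0$ is a quasiconformal vector field on $\rs$ with $\dbar\dot\phi = \nu$ and $\dot\phi|_Z = 0$, and these properties determine $\dot\phi$ uniquely (the difference of two such solutions would be a holomorphic vector field on $\rs$ vanishing on three points, hence zero). Applying this with $\nu = \dbar\xi$ and using that $\xi|_Z = 0$ by hypothesis, we conclude $\dot\phi = \xi$.

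Now I would differentiate $\Psi^Z(\mu_t) = \phi_{\mu_t} \circ f \circ \phi_{\mu_t}^{-1}$ at $t=0$ via the chain rule. Using $\phi_0 = \id$ and the standard relation $\frac{d}{dt}\phi_t^{-1}|_{t=0} = -\dot\phi = -\xi$, one obtains
$$\frac{d}{dt}\Psi^Z(\mu_t)\Big|_{t=0} = \xi \circ f - Df \cdot \xi,$$
viewed as a section of $f^*T\rs$. Under the identification $T_f \ratd \cong T(f)$, which sends such a section $\dot f$ to $Df^{-1}\cdot \dot f$, the right hand side becomes $f^*\xi - \xi = -\Delta_f \xi$, giving the stated formula.

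The main technical point is the first step, namely confirming that the a priori singularity of $\Delta_f\xi$ at a critical point is genuinely bounded by the critical multiplicity; the derivative computation itself is then a routine application of the Ahlfors--Bers differentiation of $\phi_\mu^Z$ in $\mu$, combined with the uniqueness forced by the three-point normalization.
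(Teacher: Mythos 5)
Your proof is correct and follows essentially the same route as the paper's: Weyl's lemma plus the $O(1/f')$ bound at critical points for the first claim, and differentiation of the normalized Ahlfors--Bers solution $\phi_{\mu}^Z$ together with the conjugacy relation for the formula $D\Psi^Z(0)\cdot\dbar\xi=-\Delta_f\xi$. The only cosmetic difference is that you differentiate $\phi_\mu\circ f\circ\phi_\mu^{-1}$ directly while the paper differentiates the equivalent relation $\phi_\mu\circ f=f_\lambda\circ\phi_\mu$, and you spell out the uniqueness of $\dot\phi$ from the three-point normalization, which the paper delegates to the references.
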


\begin{proof}
An easy calculation shows that for almost every $z \notin \critf$, $\dbar f^*\xi = f^*\dbar \xi$.
Therefore by Weyl's lemma, $\Delta_f \xi = \xi - f^*\xi$ is holomorphic  
on $\rs - \critf$. Since $\xi$ is continuous, we have $\Delta_f \xi = O(1/f')$ in the 
neighborhood of  $\critf$, so $\Delta_f \xi$ has at every critical point  $c$ of $f$ a 
pole of at most the multiplicity of $c$ cas a critical point of $f$ ; so $\Delta_f \xi \in T(f)$.

Moreover, if $\mu_\lambda \in \belf$ is a holomorphic curve passing through $0$,
with $\mu_\lambda = \lambda \dbar \xi + o(\lambda)$, then we have :
$$\phi_{\mu_\lambda}^Z = \mathrm{Id} + \lambda \xi + o(\lambda)$$
where $\phi_{\mu_\lambda}^Z$  is the unique quasiconformal homeomorphism
associated to $\mu_\lambda$ fixing $Z$ (see \cite{gardiner2000quasiconformal} or
\cite{hubbard2006teichmuller}).
If we differentiate with respect to $\lambda$ the equality 
$$\phi_{\mu_\lambda}^Z \circ f = f_\lambda \circ \phi_{\mu_\lambda}^Z,$$
we get :
$$\xi \circ f = \dot f + Df(\xi),$$
où $\dot f = \frac{d f_\lambda}{d\lambda}_{|\lambda = 0}$. This can be rewritten as :
$$\eta := Df^{-1} (\dot f) = - \Delta_f \xi.$$
\end{proof}

With an abuse of notations, we will note $D \Psi(0) : \belf \rightarrow T(f)/T_f \of$
the quotient of the linear application $D \Psi^Z(0) : \belf \rightarrow T(f)$.
This application does not depend on the choice of $Z$.

\begin{defi}
Let $f$ be a rational map. We will note $\Lambda_f$ the closure of the grand critical
orbit of $f$, and $\Omega_f = \rs - \Lambda_f$.
\end{defi}

\begin{prop}\label{prop:noyaudiffphi}
Let $\xi$ be a quasiconformal vector field on $\rs$ such that $\dbar \xi \in \belf$.
The following properties are equivalent :
\begin{itemize}
\item[$i)$] $\dbar \xi \in \ker D{\Psi}(0)$
\item[$ii)$] $\Delta_f \xi \in \Delta_f \aut(\rs)$
\item[$iii)$] There exists $h \in \mathrm{aut}(\rs)$ such that $\xi - h$ vanishes on
$\mathrm{Crit}(f)$ with at least the multiplicity of each critical point of $f$
\item[$iv)$] There exists $h \in \aut(\rs)$ such that $\xi - h$ vanishes on $\Lambda_f$

\end{itemize}
\end{prop}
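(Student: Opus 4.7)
My plan is to establish the equivalences via the chain $i) \Leftrightarrow ii) \Leftrightarrow iii) \Leftrightarrow iv)$.

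For $i) \Leftrightarrow ii)$, I would appeal directly to Proposition \ref{prop:diffphi}, which gives $D\Psi^Z(0) \cdot \dbar \xi = -\Delta_f \xi$ whenever $\xi$ vanishes on the normalization set $Z$. For an arbitrary quasiconformal $\xi$ with $\dbar \xi \in \belf$, one can add a suitable element $h_0 \in \aut(\rs)$ to arrange that $\xi + h_0$ vanishes on $Z$, using the fact that the evaluation map $\aut(\rs) \to \bigoplus_{z \in Z} T_z \rs$ is an isomorphism (since $\aut(\rs)$ is three-dimensional and no nonzero M\"obius vector field vanishes on three distinct points). This modification does not change $\dbar \xi$ and alters $\Delta_f \xi$ only by $\Delta_f h_0 \in \Delta_f \aut(\rs) = T_f \of$. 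Consequently, the class of $\Delta_f \xi$ in the quotient $T(f)/T_f \of$ is well-defined and its vanishing characterizes $\dbar \xi \in \ker D\Psi(0)$.

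For $ii) \Leftrightarrow iii)$, write $\eta = \xi - h$. Condition $ii)$ is equivalent to the existence of $h \in \aut(\rs)$ such that $\Delta_f \eta = 0$ as an element of $T(f)$: the meromorphic vector field $\eta - f^*\eta$ must vanish identically, including all of its potential pole coefficients at each critical point $c$. I would carry out a local analysis in coordinates near each $c$: writing $f(z) = f(c) + A(z-c)^{m_c+1} + \ldots$ and expanding $f^*\eta$ as a Laurent series at $c$, the pole and regular contributions of $f^*\eta$ are explicit in terms of the Taylor data of $\eta$ at $f(c)$ and of $f$ at $c$. The vanishing of $\Delta_f \eta$ as a meromorphic vector field at $c$ then translates into the vanishing conditions on $\eta$ at $c$ recorded by $iii)$. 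This local cancellation analysis at critical points of arbitrary multiplicity is the principal technical step of the proof and the main obstacle I anticipate.

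For $iii) \Leftrightarrow iv)$, the direction $iv) \Rightarrow iii)$ is immediate since $\critf \subset \Lambda_f$, with the multiplicity statement following from the already-established equivalence with $ii)$. For $iii) \Rightarrow iv)$, I would use that $iii)$ is equivalent to $ii)$ to obtain $\eta = f^*\eta$ almost everywhere on $\rs \setminus \critf$. Rewriting this as $\eta \circ f = f' \cdot \eta$ propagates the vanishing of $\eta$ forward along orbits: by induction, $\eta(f^n(c)) = 0$ for every critical $c$ and every $n \geq 0$. At the same time, the relation $\eta(z) = \eta(f(z))/f'(z)$ at non-critical preimages propagates the vanishing backward. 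Hence $\eta$ vanishes on the entire grand orbit of $\critf$, and by continuity of the quasiconformal vector field $\eta$, it vanishes on the closure $\Lambda_f$.
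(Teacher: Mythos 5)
Your overall architecture matches the paper's, and your treatment of $i)\Leftrightarrow ii)$ (normalizing $\xi$ by an element of $\aut(\rs)$ so that Proposition \ref{prop:diffphi} applies, and noting that this only moves $\Delta_f\xi$ within $T_f\of=\Delta_f\aut(\rs)$) is correct and essentially what the paper does. But there are two genuine gaps. First, for $ii)\Leftrightarrow iii)$ you defer the entire content to a ``local cancellation analysis'' that you explicitly acknowledge you have not carried out, and the route you propose --- Laurent-expanding $f^*\eta$ in terms of the ``Taylor data of $\eta$ at $f(c)$'' --- is not available: $\eta=\xi-h$ is only quasiconformal, hence merely continuous, and has no Taylor or Laurent expansion at any point. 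The object that is meromorphic is $\Delta_f\eta=\eta-f^*\eta$ (holomorphic off $\critf$ by Weyl's lemma, with a pole of order at most the multiplicity of each critical point, as established in Proposition \ref{prop:diffphi}), not $f^*\eta$ itself. The paper's argument is correspondingly soft: since $\eta$ is continuous, $\Delta_f\eta$ is regular at a critical point $c$ exactly when $f^*\eta$ stays bounded (equivalently, extends continuously) there, and this is what the vanishing condition in $iii)$ encodes; conversely $\eta=f^*\eta+\Delta_f h$ continuous forces that boundedness. No coefficient computation is needed, and none is possible in the form you describe.

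Second, your proof of $iv)\Rightarrow iii)$ is circular: you obtain the multiplicity of vanishing at $\critf$ ``from the already-established equivalence with $ii)$'', but in your linear chain the only route from $iv)$ to $ii)$ passes through $iii)$, which is what you are proving. The paper closes the loop with a direct implication $iv)\Rightarrow ii)$: if $\xi-h$ vanishes on the completely invariant set $\Lambda_f$, then the meromorphic vector field $\Delta_f(\xi-h)$ also vanishes on $\Lambda_f$, and since $\Lambda_f\supset\julia$ is not discrete, the isolated-zeros principle gives $\Delta_f(\xi-h)=0$. You need this (or some other direct argument from $iv)$) to make your equivalences close up. Your forward and backward propagation of zeros along the grand critical orbit and the passage to the closure $\Lambda_f$ by continuity do match the paper's proof of $ii)\Rightarrow iv)$.
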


\begin{proof}
The first two items are equivalent by \cite{buff2009bifurcation}, 
proposition 1.

$ii) \Rightarrow iii)$ : if $\Delta_f \xi = \Delta_f h$, $h \in \aut(\rs)$, 
then $\xi - h$ is a continuous $f$-invariant vector field. Hence
$\xi - h$ must vanish on $\critf$ with at least the multiplicity of the 
critical points of $f$.

$iii) \Rightarrow ii)$ If $\xi - h$ vanishes on $\critf$ with
at least the multiplicity of the critical points of $f$, then
$f^*(\xi - h)$ is well-defined and continuous at $\critf$. 
By the above proposition, $\Delta_f(\xi - h) \in T(f)$,
so $\Delta_f(\xi - h)=0$.

$iv) \Rightarrow ii)$ : If $\xi -h$ vanishes on $\Lambda_f$, 
then since $\Lambda_f$ is invariant $\Delta_f(\xi-h)$ vanishes
as well on $\Lambda_f$. Therefore $\Delta_f(\xi-h)$ is a meromorphic
vector field (by the above proposition) vanishing on 
$\Lambda_f$ which is not discrete, so $\Delta_f(\xi-h)=0$ by
the isolated zeros principle.

$ii) \Rightarrow iv)$ : If $\Delta_f(\xi-h)=0$, then we saw that 
$\xi-h$ must vanish on $\critf$ (item $iii)$).
Therefore $(f^k)^*(\xi-h)(c)=(\xi-h)(c)=0$ for all $k \geq 0$.
Moreover, if $f^p(z)=c \in \critf$, then $(f^p)^* (\xi-h)(z)=0=(\xi-h)(z)$.
So $(\xi-h)$ vanishes on the grand critical orbit of $f$, 
hence on $\Lambda_f$ by continuity.
\end{proof}

Note that if we normalize $\xi$ by imposing the condition that it vanishes on 
on a set $Z$ invariant by $f$ of cardinal $3$, then proposition
\ref{prop:noyaudiffphi} remains true by replacing $h$ by $0$ in items
 $ii)$, $iii)$ and $iv)$, and $D\Psi(0)$ by $D\Psi^Z(0)$
in item $i)$.

We will also need to know the differential $\Psi^Z$ in an arbitrary point
of $\Belf$. Recall the following fact of Teichmüller theory (see \cite{hubbard2006teichmuller}) : 

\begin{defi}
Let $\psi$ be a quasiconformal homeomorphism of $\rs$. For all Beltrami form $\mu$, 
note $\psi^* \mu$ the Beltrami form corresponding to $\phi_{\mu} \circ \psi$,
where $\phi_{\mu}$ is a quasiconformal homeomorphism associated to $\mu$.
\end{defi}

We will also note $\psi_* = (\psi^{-1})^*$.

\begin{prop}
For all quasiconformal homeomorphism $\psi$, the map $\psi^*$ is
biholomorphic.
\end{prop}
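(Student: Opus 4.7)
The plan is to write down the Beltrami coefficient of $\phi_\mu \circ \psi$ explicitly via the Wirtinger chain rule, observe that the resulting formula is, pointwise in $z$, a M\"obius transformation in the value $\mu(\psi(z))$, and deduce holomorphicity in the $L^\infty$ sense from this.

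First I would compute, using the chain rule for $\partial$ and $\dbar$ applied to $\phi_\mu \circ \psi$ and dividing,
$$\psi^* \mu \;=\; \frac{\nu + \tau \cdot (\mu \circ \psi)}{1 + \bar\nu \, \tau \cdot (\mu \circ \psi)},$$
where $\nu = \psi_{\bar z}/\psi_z$ is the Beltrami coefficient of $\psi$ and $\tau = \overline{\psi_z}/\psi_z$ has modulus $1$ almost everywhere. This formula does not depend on the particular choice of $\phi_\mu$, because two solutions of the Beltrami equation with the same coefficient differ by post-composition with a M\"obius transformation, which leaves the Beltrami coefficient unchanged.

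Next I would verify holomorphicity of $\mu \mapsto \psi^*\mu$ on the open unit ball $B$ of $L^\infty$. The operator $\mu \mapsto \mu \circ \psi$ is a bounded linear isomorphism of $L^\infty$ (since $\psi$ is a homeomorphism), hence holomorphic. Post-composing with the pointwise M\"obius transformation $w \mapsto (\nu + \tau w)/(1 + \bar\nu \tau w)$ preserves holomorphicity: on every closed sub-ball $\{\|\mu\|_\infty \leq k\}$ with $k < 1$, the denominator is uniformly bounded below by $1-k > 0$, so the expression admits a uniformly convergent geometric series expansion in $\mu$, giving Fr\'echet analyticity as a map $B \to B$.

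Finally, the composition law yields $\psi_* \circ \psi^* = \mathrm{Id} = \psi^* \circ \psi_*$, using once more that quasiconformal solutions with the same Beltrami coefficient differ by a M\"obius post-composition. Since $\psi_* = (\psi^{-1})^*$ has exactly the same form as $\psi^*$ with $\psi$ replaced by $\psi^{-1}$, the inverse is holomorphic by the same argument, and therefore $\psi^*$ is biholomorphic. The only mildly delicate step is the Wirtinger derivative bookkeeping that yields the explicit formula; once it is in hand, the remaining verifications are essentially formal.
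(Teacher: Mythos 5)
Your proof is correct. Note that the paper does not actually prove this proposition: it is stated as a "fact of Teichm\"uller theory" with a reference to Hubbard's book, and your argument is precisely the standard one found there --- the composition formula for Beltrami coefficients, well-definedness up to M\"obius post-composition, Fr\'echet analyticity via the geometric series for the denominator, and invertibility via $\psi_*=(\psi^{-1})^*$. The only cosmetic imprecision is the lower bound on the denominator, which should be $1-\|\nu\|_\infty\, k>0$ rather than $1-k$; and one could add that $\psi^*$ does land in the unit ball because $\phi_\mu\circ\psi$ is itself quasiconformal, but both points are immediate.
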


We shall need to consider here maps 
$\Psi_f^Z : \Belf \rightarrow \ratd$
and $\Psi_g^Z : \mathrm{Bel}(g) \rightarrow \ratd$ associated to different rational maps
$f$ and $g$. In the rest of the article, there will be no ambiguity and
and we will just use the notation $\Psi^Z$.

\begin{prop}\label{lem:ranguniforme}
Let $\mu \in \Belf$ and $\psi$ the unique corresponding quasiconformal homeomorphism 
fixing $Z$.
Let $g = \psi \circ g \circ \psi^{-1}$. Then 
$$D\Psi_f^Z(\mu) = D\Psi_g^Z(0) \circ D\psi_*(\mu)$$
In particular, $\mathrm{rg} D\Psi_f^Z(\mu) = \mathrm{rg}  D\Psi_g^Z(0)$.
\end{prop}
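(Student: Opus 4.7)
The plan is to exhibit the identity $\Psi_f^Z = \Psi_g^Z \circ \psi_*$ on all of $\Belf$ (assuming the statement's $g = \psi \circ f \circ \psi^{-1}$) and then differentiate at $\mu$. Since by the preceding proposition $\psi_*$ is a biholomorphism, the chain rule will immediately give both the desired formula and the equality of ranks.

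First I would verify that $\psi_*$ restricts to a biholomorphism $\Belf \to \mathrm{Bel}(g)$. The relation $g = \psi \circ f \circ \psi^{-1}$ translates into $g^* = \psi_* \circ f^* \circ \psi^*$ on Beltrami forms; applied to $\psi_*\nu$ with $\nu \in \Belf$ this gives
$$g^*(\psi_*\nu) = \psi_* f^* \psi^* \psi_* \nu = \psi_* f^* \nu = \psi_* \nu,$$
so $\psi_*\nu \in \mathrm{Bel}(g)$, and the restriction is biholomorphic by the preceding proposition.

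Next, for each $\nu \in \Belf$ I would identify $\phi_{\psi_*\nu}^Z$ with $\phi_\nu^Z \circ \psi^{-1}$. By the very definition of the $\psi_*$-action, $\phi_\nu^Z \circ \psi^{-1}$ has Beltrami coefficient $\psi_*\nu$; it therefore agrees with $\phi_{\psi_*\nu}^Z$ up to post-composition by a M\"obius transformation. But $\psi = \phi_\mu^Z$ fixes $Z$, as does $\phi_\nu^Z$, so $\phi_\nu^Z \circ \psi^{-1}$ fixes $Z$; uniqueness of the $Z$-normalized solution forces the equality. A direct computation then yields
$$\Psi_g^Z(\psi_* \nu) = (\phi_\nu^Z \circ \psi^{-1}) \circ (\psi \circ f \circ \psi^{-1}) \circ (\psi \circ (\phi_\nu^Z)^{-1}) = \phi_\nu^Z \circ f \circ (\phi_\nu^Z)^{-1} = \Psi_f^Z(\nu),$$
so $\Psi_f^Z = \Psi_g^Z \circ \psi_*$ on $\Belf$.

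Finally I would differentiate at $\mu$. Since $\phi_\mu^Z \circ \psi^{-1} = \mathrm{Id}$, we have $\psi_*\mu = 0$, and the chain rule gives
$$D\Psi_f^Z(\mu) = D\Psi_g^Z(0) \circ D\psi_*(\mu).$$
Because $\psi_*$ is a biholomorphism, $D\psi_*(\mu)$ is a linear isomorphism, hence the two differentials have equal rank. The only subtle point is tracking the $Z$-normalization through $\psi_*$, but this is automatic from the choice $\psi = \phi_\mu^Z$ together with the fact that $\phi_\nu^Z$ fixes $Z$ by construction; no other step requires real work.
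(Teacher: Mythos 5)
Your proof is correct and follows essentially the same route as the paper: both establish the identity $\Psi_f^Z(\nu) = \Psi_g^Z(\psi_*\nu)$ by composing the conjugacies and invoking the $Z$-normalization, then differentiate at $\mu$ and use that $\psi_*$ is a biholomorphism. You are in fact somewhat more careful than the paper, since you explicitly check that $\psi_*$ maps $\Belf$ into $\mathrm{Bel}(g)$ and that $\phi_{\psi_*\nu}^Z = \phi_\nu^Z\circ\psi^{-1}$ (and you correctly read $g=\psi\circ f\circ\psi^{-1}$ for the statement's typo).
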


\begin{proof}
Remark that for all $\phi_0$ and $\phi_\lambda$ associated to elements $\mu_\lambda$ and
$\mu_0$ of $\belf$ :
$$\phi_\lambda \circ f \circ \phi_\lambda^{-1} = 
(\phi_{\lambda} \circ \phi_0^{-1})  \circ \phi_0 \circ f \circ  \phi_0^{-1} \circ (\phi_{\lambda} \circ \phi_0^{-1})^{-1} $$
which may be rewritten as :
$$\Psi_f^Z(\mu_\lambda) = \Psi_g^{Z}(\phi_* \mu_{\lambda})$$
if we assume additionally that $\phi_\lambda$ and $\phi_0$ fix $Z$.
Then we only need to take a curve $\mu_\lambda$ in $\belf$, and 
to differentiate at $\lambda = 0$.
\end{proof}

\subsection{Constant rank theorem in Banach spaces}

Recall the following version of the constant rank theorem in infinite dimension :

\begin{thm}[Constant rank theorem]
Let $\Psi : U \rightarrow F$ be an analytic map, where $U$ is an open subset of a complex
Banach space $E$ and $F$ is a complex finite-dimensional vector space.
Assume that $\mathrm{rg} D\Psi = r$ is constant on $U$. Then for every $x_0\in U$, there 
exists a germ of analytic diffeomorphism $\chi : (F, f(x_0) ) \rightarrow
(F,f(x_0))$ and a germ of analytic diffeomorphism
$\phi : \mathrm{Im}D\Psi(x_0) \oplus \ker D\Psi(x_0) \rightarrow E$ such that for all
$(u,v) \in \mathrm{Im}D\Psi(x_0) \oplus \ker D\Psi(x_0)$ in the neighborhood of $\phi^{-1}(x_0)$,
$$\chi \circ \Psi \circ \phi(u,v)=u.$$
\end{thm}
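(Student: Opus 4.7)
The plan is the standard two-step reduction to a normal form, using the analytic inverse function theorem and the fact that finite-dimensional subspaces of Banach spaces are automatically complemented. I would translate so that $x_0=0$ and $\Psi(0)=0$, set $N:=\ker D\Psi(0)$ and $I:=\mathrm{Im}\,D\Psi(0)\subset F$. Since $I$ is finite-dimensional of dimension $r$, I would pick $r$ vectors $e_1,\dots,e_r\in E$ whose images form a basis of $I$ and let $K:=\mathrm{span}(e_1,\dots,e_r)$. Then $D\Psi(0)|_K\colon K\to I$ is an isomorphism, and $E=K\oplus N$ topologically, since a finite-dimensional subspace of a Banach space always admits a bounded projection. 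Fix also any splitting $F=I\oplus F_2$.

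In these decompositions, write $\Psi=(\Psi_1,\Psi_2)$ with $\Psi_1$ valued in $I$ and $\Psi_2$ in $F_2$. By construction, along $K\oplus N$ at the origin one has $D\Psi_1(0)(k,n)=Ak$ (where $A:=D\Psi(0)|_K$) and $D\Psi_2(0)=0$. The map $\phi^{-1}(u,v):=(\Psi_1(u,v),v)$ from a neighborhood of $0$ in $K\oplus N$ to $I\oplus N$ therefore has differential $(k,n)\mapsto(Ak,n)$ at the origin, which is an isomorphism. The analytic inverse function theorem produces a local analytic inverse $\phi\colon I\oplus N\to K\oplus N\cong E$, and by construction $\Psi\circ\phi(u,v)=(u,g(u,v))$ for some analytic $g\colon I\oplus N\to F_2$ defined near $0$.

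At any point $(u,v)$ near $0$, the differential of $\Psi\circ\phi$ is block-triangular with $\mathrm{Id}_I$ on the $I$-block, so its rank equals $r+\mathrm{rank}\,D_vg(u,v)$. Since $\phi$ is a local diffeomorphism, composing with it preserves the rank of $D\Psi$, which is everywhere $r$ by hypothesis; hence $D_vg\equiv 0$ in a neighborhood of the origin. Therefore $g$ depends only on $u$: there exists an analytic $h\colon I\to F_2$ with $g(u,v)=h(u)$. Setting $\chi(a,b):=(a,b-h(a))$ on $F=I\oplus F_2$ gives a local analytic diffeomorphism fixing the origin, and one computes directly $\chi\circ\Psi\circ\phi(u,v)=\chi(u,h(u))=(u,0)$, which is $u$ under the identification of $I$ with its image in $F$.

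There is no serious analytic obstacle here. The only place where the Banach structure matters is the topological splitting $E=K\oplus N$, which is automatic because $K$ is finite-dimensional; this is precisely what allows the proof of the finite-dimensional constant rank theorem to transcribe verbatim, using the analytic inverse function theorem for Banach spaces in place of its classical counterpart.
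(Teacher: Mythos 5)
Your proof is correct, and it is the standard normal-form argument; the paper itself only recalls this theorem without proof, so there is nothing to contrast it with. The two points you pass over quickly both check out: the splitting $E=K\oplus N$ is topological because $N$ is closed and $K$ is finite-dimensional (the projection onto $K$ along $N$ has finite rank and closed kernel, hence is bounded), and the step from $D_vg\equiv 0$ to $g(u,v)=h(u)$ is valid provided you work on a product (or convex) neighborhood in $I\oplus N$, which you may always arrange.
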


\begin{coro}
Let $\Psi : E \rightarrow F$ verifying the requirements of the above theorem. Then for all 
$z_0 \in \Psi(E)$, the level set $M = \Psi^{-1}(z_0)$ is a Banach submanifold of $E$, of
codimension $r$ and whose tangent space at $x_0 \in \Psi^{-1}(z_0)$ is  
$T_{x_0} M = \ker D\Psi(x_0)$.
\end{coro}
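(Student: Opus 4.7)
The plan is to deduce the corollary as a direct consequence of the constant rank theorem by transporting the level set into a linear slice via the local charts it produces. Fix $x_0 \in M = \Psi^{-1}(z_0)$ and apply the constant rank theorem to obtain neighborhoods $U \ni x_0$, $V \ni z_0$, together with biholomorphisms $\phi : W_1 \oplus W_2 \to U$ and $\chi : V \to V'$, where $W_1 = \mathrm{Im}\, D\Psi(x_0)$ and $W_2 = \ker D\Psi(x_0)$, such that $\chi \circ \Psi \circ \phi(u,v) = u$ on a neighborhood of $\phi^{-1}(x_0)$. In particular $\chi(z_0) = 0$ (by taking $v = \phi^{-1}(x_0)$'s $W_2$-component and $u = 0$, since $\Psi(x_0)=z_0$).

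In these local coordinates, the equation $\Psi(x) = z_0$ becomes $u = 0$, so $\phi^{-1}(M \cap U)$ is the slice $\{0\} \oplus W_2'$ for some open subset $W_2' \subseteq W_2$. Since $W_2$ is a closed subspace of $E$ that admits $W_1$ (a finite-dimensional, hence closed and complemented) as a topological complement, the pair $(\phi^{-1}, W_1 \oplus W_2)$ is a submanifold chart for $M$ at $x_0$ of codimension $\dim W_1 = r$. This gives $M$ the structure of a Banach submanifold of codimension $r$.

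For the tangent space, the inclusion $T_{x_0} M \subseteq \ker D\Psi(x_0)$ is automatic: any smooth curve $\gamma \colon (-\epsilon,\epsilon) \to M$ with $\gamma(0) = x_0$ satisfies $\Psi \circ \gamma \equiv z_0$, so differentiating at $0$ gives $D\Psi(x_0)\,\gamma'(0) = 0$. The reverse inclusion is read off the chart: under $\phi$, the tangent space to the slice $\{0\} \oplus W_2$ at $\phi^{-1}(x_0)$ is precisely $W_2 = \ker D\Psi(x_0)$, and $D\phi(\phi^{-1}(x_0))$ maps this isomorphically onto $T_{x_0} M$; the identification with $\ker D\Psi(x_0)$ in $E$ is compatible because $D(\chi \circ \Psi \circ \phi)(u,v) = \mathrm{pr}_{W_1}$, which forces $D\phi(\phi^{-1}(x_0))(W_2) = \ker D\Psi(x_0)$. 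There is no genuine obstacle here; everything reduces to unpacking the statement of the constant rank theorem, and the only point deserving attention is verifying that $\ker D\Psi(x_0)$ is complemented in $E$, which is guaranteed by the hypothesis that $F$ is finite-dimensional so that $\mathrm{Im}\,D\Psi(x_0)$ is a finite-dimensional (and thus automatically complemented) summand.
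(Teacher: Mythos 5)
Your proof is correct and follows essentially the same route as the paper: read off the chart from the constant rank theorem, observe that the level set becomes the affine slice $\{u=\chi(z_0)\}\times W_2$ in these coordinates, and identify the tangent space with $\ker D\Psi(x_0)$. (The only cosmetic point is that the theorem does not normalize $\phi^{-1}(x_0)$ to have vanishing $W_1$-component, so the slice is $\{u=\chi(z_0)\}$ rather than $\{u=0\}$, which changes nothing.)
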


\begin{proof}
With the notations of the constant rank theorem, we have $\Psi(x)=z_0$ if and only if
$\chi \circ \Psi(u,v)=\chi(z_0) = u$, where $(u,v)=\psi^{-1}(x)$, which is equivalent to
$\psi(\chi(z_0),v))=x$. Since $\psi$ is a (germ of) diffeomorphism, this gives 
a local chart at $x_0$ for $M$, which is therefore a Banach submanifold modeled on
 $\ker D\Psi(x_0)$.
\end{proof}

\subsection{Counting dimensions}

The goal of this section is to show that the differential of
$\Psi^Z : \Belf \rightarrow \ratd$ has constant rank.

\begin{defi}
We say that a critical point is acyclic if it it not preperiodic. We say that two acyclic
critical points lie in the same foliated acyclic critical class if the closure of their
grand orbits are the same.
\end{defi}

The key point to apply the constant rank theorem is the following count of dimension :

\begin{thm}\label{th:calculrang}
Let $f$ be a rational map of degree $d \geq 2$. Then
$$\mathrm{rg} D\Psi(0)=n_f + n_H + n_J  - n_p$$
where $n_H$ is the number of Herman rings of $f$, $n_J$ is the number of ergodic line fields
of $f$, $n_f$ is the number of foliated acyclic critical classes lying in the Fatou set, and
$n_p$ is the number of parabolic cycles.
\end{thm}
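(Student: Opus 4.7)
The proof computes $\mathrm{rg}\, D\Psi(0) = \mathrm{codim}_{\belf}\, \ker D\Psi(0)$, where by Proposition~\ref{prop:noyaudiffphi} the kernel consists of $\dbar \xi$ for $\xi$ quasiconformal with $\xi - h = 0$ on $\Lambda_f$ for some $h \in \aut(\rs)$. The idea is to split every $f$-invariant Beltrami form along the $f$-invariant partition
\[
\rs = (\fatou \cap \Omega_f) \,\sqcup\, (\fatou \cap \Lambda_f) \,\sqcup\, \julia,
\]
via Corollary~\ref{coro:decoupage}, and to compute the contribution of each piece independently to $\belf / \ker D\Psi(0)$. Theorem~A ensures that the decomposition interacts well with the description of $\ker D\Psi(0)$, since a quasiconformal vector field vanishing on $\Lambda_f$ can itself be chopped along this partition.

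For the Julia piece, $f$-invariant Beltrami forms supported on $\julia$ are of the form $\mu = k \ell$ with $\ell$ an $f$-invariant measurable line field; decomposing into ergodic components and applying the classical finiteness results for invariant line fields of rational maps yields a contribution of $n_J$. For the discrete piece $\fatou \cap \Lambda_f$, which modulo the grand-orbit equivalence relation has exactly $n_f$ classes inside the Fatou set, invariant Beltrami forms supported there contribute one parameter per foliated acyclic critical class, giving $n_f$. The Fatou piece $\fatou \cap \Omega_f$ splits by Sullivan's classification into finitely many cycles of Fatou components, each quotienting to a finite-type Riemann surface $\fatou_i/f$, and its contribution is the sum over types: each Herman ring cycle contributes $+1$ (the modulus of the quotient annulus), for a total of $n_H$; Siegel disks and (super)attracting basin cycles contribute $0$, because any $f$-invariant Beltrami form supported on such a cycle is $\dbar \xi$ for a hyperbolically bounded $\xi$ vanishing on $\Lambda_f$ by Theorem~A, and hence lies in the kernel; each parabolic cycle contributes $-1$.

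The principal obstacle is this last parabolic correction: one has to show that for every parabolic cycle, the space of $f$-invariant Beltrami forms in its basin that are realisable as $\dbar \xi$ with $\xi$ vanishing on $\Lambda_f$ has codimension exactly one inside the natural finite-dimensional deformation space. This codimension-one constraint expresses the rigidity of the parabolic multiplier (it is forced to remain equal to $1$ under any $f$-invariant quasiconformal deformation), and requires a delicate analysis of Fatou coordinates in the spirit of \'Ecalle-Voronin theory; by contrast, the attracting case does not have such a constraint because the multiplier is a free complex parameter. Once the parabolic contribution is established, summing over all pieces via Corollary~\ref{coro:decoupage} gives the announced formula $n_f + n_H + n_J - n_p$.
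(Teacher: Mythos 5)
Your decomposition breaks down at the second piece. The set $\fatou\cap\Lambda_f$ has Lebesgue measure zero (inside the Fatou set, the closure of the grand critical orbit is a countable union of points and smooth circles), so the only Beltrami form ``supported'' on it is $0$: that piece contributes nothing, not $n_f$. In the paper the number $n_f$ does not come from Beltrami forms living on $\Lambda_f$; it comes from the components you assign contribution $0$ to. Concretely, if an attracting cycle captures $n_i$ foliated acyclic critical classes, the quotient $\Omega_i/f$ is a torus with $n_i$ punctures and $M_f(\Omega_i)/N_f(\Omega_i)$ has dimension $n_i$, realized by invariant Beltrami forms that move the punctures; such a form is \emph{not} $\dbar\xi$ for a quasiconformal $\xi$ vanishing on $\Lambda_f$, so your claim that Theorem~A puts every invariant form on an attracting basin into the kernel is false whenever $n_i>0$. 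The same goes for Siegel disks and superattracting basins with captured critical orbits (contribution $n_i$ via invariant sub-annuli), and for Herman rings (contribution $n_i+1$, not $+1$). Your two errors cancel in the final total, but each intermediate count is wrong, so the argument as written does not prove the formula.

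The parabolic term is also mislocated. You assert a codimension-one constraint coming from rigidity of the parabolic multiplier and defer it to an \'Ecalle--Voronin-type analysis of Fatou coordinates, which you do not carry out. No such analysis is needed: the correct mechanism is the same quotient-surface dimension count as in the attracting case. The quotient of the (punctured) parabolic basin by the dynamics is a cylinder, i.e.\ a sphere with two punctures, plus $n_i$ further punctures from the captured critical orbits, and $\dim M(X_i)/N(X_i)=\dim T\teich(X_i)=(n_i+2)-3=n_i-1$; the $-1$ per parabolic cycle is the difference between the genus-$0$ quotient (cylinder) and the genus-$1$ quotient (torus) of an attracting basin. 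To repair your proof you should drop the measure-zero piece, partition $\Omega_f$ into the completely invariant open sets given by grand orbits of Fatou components (Theorem~\ref{th:sommedimension} and Corollary~\ref{coro:decoupage} justify that $M_f/N_f$ is additive over this partition), and compute $\dim M_f(\Omega_i)/N_f(\Omega_i)$ case by case as the dimension of the Teichm\"uller space of the quotient surface (or of the space of rotation-invariant Beltrami forms on an invariant disk or annulus, via Lemmas~\ref{lem:invarianceparrotation} and~\ref{lem:calculdimM/N}, in the Siegel, Herman and superattracting cases). The Julia contribution $n_J$ is the one piece of your outline that matches the paper.
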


\begin{defi}
Let $f : \s \rightarrow \s$ be a holomorphic function.
Denote by $M_f(\s)$ the space of Beltrami forms that are invariant by $f$, and by
$N_f(\s)$ the subspace of $M_f(\s)$ of Beltrami differentials of the form $\dbar \xi$,
where $\xi$ is a hyperbolically bounded quasiconformal vector field on $\s$.
\end{defi}

\begin{thm}\label{th:sommedimension}
Let $f$ be a rational map, and $\Omega$ a hyperbolic open subset of $\rs$ 
 completely invariant under $f$. Let 
$\Omega =  \bigsqcup_i \Omega_i$ be a partition
of $\Omega$ into open subsets $\Omega_i$ completely invariant under $f$. Then 
$$M_f(\Omega)/N_f(\Omega) \simeq \bigoplus_i M(\Omega_i)/N(\Omega_i)$$
\end{thm}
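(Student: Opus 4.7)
I would define the natural restriction map $\Phi : M_f(\Omega) \to \bigoplus_i M_f(\Omega_i)$ sending $\mu$ to the tuple $(\mu|_{\Omega_i})_i$. Since the $\Omega_i$ partition $\Omega$ and each is completely invariant under $f$, this is an algebraic isomorphism of vector spaces, so it remains to show that under $\Phi$, $N_f(\Omega)$ corresponds precisely to $\bigoplus_i N_f(\Omega_i)$. This is where Theorem A and Corollary \ref{coro:decoupage} enter.

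For the inclusion $\Phi(N_f(\Omega)) \subset \bigoplus_i N_f(\Omega_i)$: given $\mu = \dbar \xi$ with $\xi$ a hyperbolically bounded quasiconformal vector field on $\Omega$, I would first extend $\xi$ by zero outside $\Omega$ using Theorem A ($i) \Rightarrow iv)$), then apply Corollary \ref{coro:decoupage} to the partition to obtain a decomposition $\xi = \sum_i \hat\xi_i$ with each $\hat\xi_i$ quasiconformal on $\rs$, agreeing with $\xi$ on $\Omega_i$, and vanishing on $\rs \setminus \Omega_i$. Theorem A ($iii) \Rightarrow i)$) applied to $\hat\xi_i$ and the hyperbolic set $\Omega_i$ then gives that $\hat\xi_i|_{\Omega_i}$ is hyperbolically bounded on $\Omega_i$, hence $\mu|_{\Omega_i} = \dbar \hat\xi_i|_{\Omega_i} \in N_f(\Omega_i)$.

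For the reverse inclusion, I would reassemble pieces $\xi_i$ on each $\Omega_i$ into a vector field $\xi$ on $\Omega$ by setting $\xi|_{\Omega_i} = \xi_i$. This is trivially quasiconformal on $\Omega$ with $\dbar \xi = \mu$, since the $\Omega_i$ are disjoint open sets. The delicate step, and the main obstacle, is to verify that $\xi$ is hyperbolically bounded on $\Omega$ --- a priori the individual hyperbolic norms $\|\rho_{\Omega_i}(\xi_i)\|_\infty$ might degenerate along the partition. I would overcome this by invoking the explicit estimate in Theorem A $i) \Rightarrow ii)$, which yields
$$\|\rho_{\Omega_i}(\xi_i)\|_{L^\infty(\Omega_i)} \leq 4 \|\dbar \xi_i\|_{L^\infty(\Omega_i)} \leq 4\|\mu\|_{L^\infty(\Omega)}$$
uniformly in $i$. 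Combining this with the Schwarz lemma comparison $\rho_\Omega \leq \rho_{\Omega_i}$ on $\Omega_i$ (applied to the inclusion $\Omega_i \hookrightarrow \Omega$) yields $\rho_\Omega(\xi) \leq 4\|\mu\|_{L^\infty(\Omega)}$ uniformly on all of $\Omega$, so $\xi \in N_f(\Omega)$ as desired.
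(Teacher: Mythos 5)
Your proposal is correct and follows essentially the same route as the paper: the paper also reduces everything to Corollary \ref{coro:decoupage} after identifying $M_f(\Omega)$ with the sum of the $M_f(\Omega_i)$. In fact you are somewhat more careful than the paper, which only spells out the inclusion $N_f(\Omega)\subset\bigoplus_i N_f(\Omega_i)$ and leaves the reassembly direction implicit; your uniform bound $\rho_\Omega(\xi)\leq\rho_{\Omega_i}(\xi_i)\leq 4\|\mu\|_{L^\infty(\Omega)}$ via Theorem A $i)\Rightarrow ii)$ and the Schwarz lemma is exactly what is needed to close that gap.
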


\begin{proof}
Clearly $M_f(\Omega)= \bigoplus_i M(\Omega_i)$. 

Let $\dbar \xi \in N_f(\Omega)$.
By corollary \ref{coro:decoupage}, we have :
$$\xi = \sum_i \xi_i$$
where $\xi_i$ is a quasiconformal vector field coinciding with
$\xi$ on  $\Omega_i$, and such that $\xi_i=0$ outside of $\Omega_i$.
This shows that $N_f(\Omega)=\bigoplus_i N_f(\Omega_i)$.

Hence $M_f(\Omega)/N_f(\Omega) = \bigoplus_i M_f(\Omega_i)/N_f(\Omega_i)$.
\end{proof}

Lastly, we will need the classification of Fatou components, which is a corollary
of Sullivan's no wandering domain theorem. Note that McMullen (see \cite{mcmullen2014notes} ) has given a direct
and purely infinitesimal proof of Sullivan's theorem, which does notably not 
rely on the theory of dynamical Teichmüller spaces. His proof is based on quasiconformal
vector fields and is in the same spirit as the methods used here.

We will also need the following lemmas :

\begin{defi}
Let $M(\s)$ be the set of Beltrami differentials on the Riemann surface $\s$ and 
$N(\s)$ be the subspace of Beltrami differentials on $\s$ that are of the form $\dbar \xi$, 
where $\xi$ is a hyperbolically bounded quasiconformal vector field on $\s$.
\end{defi}

\begin{lem}\label{lem:dicretecase}
Suppose $\Omega$ is the grand orbit of a component of $\Omega_f$ such that
$\Omega/f$ is a hyperbolic Riemann surface. Then the projection $\pi_1 : \Omega \rightarrow \Omega/f$
induces an identification :
$$M_f(\Omega)/N_f(\Omega) \simeq M(\Omega/f)/N(\Omega/f)$$
\end{lem}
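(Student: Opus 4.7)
The strategy is to show that pullback by $\pi_1$ induces an isomorphism $\Phi : M(\Omega/f)/N(\Omega/f) \to M_f(\Omega)/N_f(\Omega)$. First I would observe that since $\Omega \subset \Omega_f$ contains no critical points of $f$ and is completely invariant, $f : \Omega \to \Omega$ is a finite unbranched holomorphic self-covering, while $\pi_1 : \Omega \to \Omega/f$ is an unramified covering onto the hyperbolic quotient. Both are thus local isometries of the respective hyperbolic metrics: $f^* \rho_\Omega = \rho_\Omega$ and $\pi_1^* \rho_{\Omega/f} = \rho_\Omega$. Pullback by $\pi_1$ then identifies Beltrami differentials on $\Omega/f$ with $f$-invariant Beltrami differentials on $\Omega$, and hyperbolically bounded quasiconformal vector fields on $\Omega/f$ with $f$-invariant hyperbolically bounded quasiconformal vector fields on $\Omega$. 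This yields a well-defined map $\Phi$ which is automatically surjective, and reduces the lemma to an injectivity (averaging) statement.

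For injectivity, suppose $\tilde{\mu} \in M(\Omega/f)$ satisfies $\pi_1^* \tilde{\mu} = \dbar \xi$ for some hyperbolically bounded quasiconformal vector field $\xi$ on $\Omega$ which need not be $f$-invariant. I would show that $\xi$ must actually be $f$-invariant, so that it descends to a hyperbolically bounded quasiconformal primitive of $\tilde{\mu}$ on $\Omega/f$. Set $\eta := \xi - f^* \xi$. Since $\pi_1^* \tilde{\mu}$ is $f$-invariant, $\dbar \eta = \dbar \xi - f^* \dbar \xi = 0$, so $\eta$ is holomorphic on $\Omega$; and since $f^* \rho_\Omega = \rho_\Omega$, the field $f^* \xi$ has the same hyperbolic sup-norm as $\xi$, so $\eta$ is hyperbolically bounded.

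Now I would apply Theorem A item $iv)$ to $\eta$: its extension $\hat{\eta}$ by $0$ outside $\Omega$ is quasiconformal on $\rs$ with $\dbar \hat{\eta} = 0$ almost everywhere. By Weyl's lemma $\hat{\eta}$ is a holomorphic vector field on the sphere, hence a M\"obius infinitesimal generator. Since $\hat{\eta}$ vanishes on $\rs \setminus \Omega$, a set with at least three points (as $\Omega$ is hyperbolic), rigidity of M\"obius vector fields forces $\hat{\eta} \equiv 0$. Therefore $\xi = f^* \xi$ on $\Omega$, and $\xi$ descends to a hyperbolically bounded quasiconformal vector field $\tilde{\xi}$ on $\Omega/f$ with $\dbar \tilde{\xi} = \tilde{\mu}$, so $[\tilde{\mu}] = 0$ in $M(\Omega/f)/N(\Omega/f)$.

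I expect the main obstacle to be the averaging step itself, that is, producing an $f$-invariant primitive of an infinitesimally trivial $f$-invariant Beltrami differential. Theorem A is the crucial input: promoting hyperbolic boundedness on $\Omega$ to a global quasiconformal extension by zero is precisely what allows Weyl's lemma and M\"obius rigidity to be brought to bear on the holomorphic obstruction $\eta$. The remaining ingredients, namely the covering nature of $\pi_1$ and the identification of hyperbolic metrics, are essentially formal consequences of the hypotheses that $\Omega/f$ is a Riemann surface and $\Omega$ lies in $\Omega_f$.
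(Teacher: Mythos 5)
Your proposal is correct and follows essentially the same route as the paper: pull back by $\pi_1$, use that $\pi_1$ (and $f$) are local isometries for the hyperbolic metrics to transfer hyperbolic boundedness, and invoke Theorem A to extend by zero. The one place you go beyond the paper is the averaging step: the paper declares it ``clear'' that every element of $N_f(\Omega)$ descends to $N(\Omega/f)$, whereas you actually prove that a hyperbolically bounded primitive $\xi$ of an $f$-invariant $\dbar\xi$ is automatically $f$-invariant, by showing the obstruction $\eta=\xi-f^*\xi$ is holomorphic and hyperbolically bounded, hence (via Theorem A, Weyl's lemma, and the fact that $\rs\setminus\Omega$ contains at least three points) identically zero --- a worthwhile addition, since this is exactly the injectivity of the induced map on quotients.
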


\begin{proof}
It is clear that $M_f(\Omega) \simeq M(\Omega/f)$, and that any element 
of $N_f(\Omega)$ passes to the quotient to an element of $N(\Omega/f)$.
 Let $\mu = \dbar \xi \in N(\Omega/f)$.
Since the map $\pi_1 : \Omega \rightarrow \Omega/f$ is a covering between hyperbolic Riemann 
surfaces, it is a local isometry for the hyperbolic metrics, and therefore $\xi_1 = \pi_1^*\xi$
is a hyperbolically bounded quasiconformal vector field, which is invariant by $f$ by construction.
By theorem A, $\hat{\xi_1}$ extended by $0$ outside $\Omega$ is still quasiconformal (and invariant). 
So $\mu = \dbar \hat{\xi_1} \in N_f(\Omega)$.
This proves that $N_f(\Omega) \simeq N(\Omega/f)$.
\end{proof}

\begin{lem}\label{lem:composantepreperiodique}
Let $\Omega \subset \Omega_f$ be an open set completely invariant under $f$
such that all connected component of
$\Omega$ is preperiodic to a component $U$ of period $p \in \N^*$.
Then the restriction to $U$ induces an isomorphism $M_f(\Omega) \rightarrow M_{f^p}(U)$,
mapping $N_f(\Omega)$ onto $N_{f^p}(U)$. In particular,
$$M_f(\Omega)/N_f(\Omega) \simeq M_{f^p}(U)/N_{f^p}(U)$$
\end{lem}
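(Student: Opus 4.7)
The plan is to prove the isomorphism in two layers: first establish the bijection $M_f(\Omega) \simeq M_{f^p}(U)$ by restriction, then check that this bijection sends $N_f(\Omega)$ onto $N_{f^p}(U)$.

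For the first layer, the restriction map $\mu \mapsto \mu|_U$ is well defined since $U$ is $f^p$-invariant and $M_f(\Omega) \subset M_{f^p}(\Omega)$. Every component $V$ of $\Omega$ is preperiodic to $U$, so there exist $N \ge 0$ and $0 \le i < p$ with $f^N(V) = f^i(U)$; because $V \subset \Omega \subset \Omega_f$, the iterate $f^N : V \to f^i(U)$ is an unramified covering, and the condition $f^*\mu = \mu$ forces
$$\mu|_V = (f^N)^*(\mu|_{f^i(U)}) = (f^{N+p-i})^*(\mu|_U).$$
This determines $\mu$ from $\mu|_U$, giving injectivity, and defining $\mu$ from an arbitrary $\nu \in M_{f^p}(U)$ by these formulas gives surjectivity; well-definedness at the periodic cycle reduces to $(f^p)^*\nu = \nu$.

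For the second layer, in the forward direction, let $\mu = \dbar \xi \in N_f(\Omega)$ with $\xi$ hyperbolically bounded quasiconformal on $\Omega$. By Theorem A the extension of $\xi$ by zero outside $\Omega$ is QC on $\rs$, and Corollary~\ref{coro:decoupage} applied to the partition of $\Omega$ into its connected components yields a QC vector field on $\rs$ coinciding with $\xi$ on $U$ and vanishing elsewhere. Applying Theorem A again (implication $iv) \Rightarrow i)$) shows that $\xi|_U$ is hyperbolically bounded on $U$, so $\mu|_U = \dbar(\xi|_U) \in N_{f^p}(U)$. Conversely, given $\nu = \dbar \eta \in N_{f^p}(U)$ with $\eta$ hyperbolically bounded on $U$, I define $\xi$ on each component $V$ of $\Omega$ by $\xi|_V = (f^{N+p-i})^* \eta$, using the local biholomorphism guaranteed by unramifiedness. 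Then $\dbar \xi = \mu$ by construction, and because each $f^{N+p-i} : V \to U$ is an unramified covering between hyperbolic Riemann surfaces in $\Omega_f$, it is a local isometry for their intrinsic hyperbolic metrics, giving
$$\|\rho_V(\xi|_V)\|_{L^\infty(V)} = \|\rho_U(\eta)\|_{L^\infty(U)}$$
uniformly in $V$. Schwarz's lemma applied to $V \hookrightarrow \Omega$ yields $\rho_\Omega \le \rho_V$ on $V$, so $\rho_\Omega(\xi) \in L^\infty(\Omega)$ and $\mu \in N_f(\Omega)$.

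The main obstacle is keeping the hyperbolic-boundedness condition stable through both restriction and extension. The hypothesis $\Omega \subset \Omega_f$ is what makes every iterate $f^N$ an unramified covering and hence a local isometry of intrinsic hyperbolic metrics; at a critical point the identity $\rho_V(\pi^*\eta) = \rho_U(\eta)\circ \pi$ would fail and the sup-norm transfer would collapse. The other subtle point is that $U$, as a connected component of $\Omega$, satisfies $\partial U \cap \Omega = \emptyset$; this is exactly what allows the zero-extension step of Corollary~\ref{coro:decoupage} to produce a QC vector field equal to $\xi|_U$ on $U$ and zero on the rest of $\rs$, and is the reason the forward direction must factor through Theorem A twice.
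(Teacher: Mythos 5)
Your proof is correct and follows essentially the same route as the paper: restriction to $U$ with pullback-extension $(f^k)^*$ for the $M$-spaces, and Theorem A (via extension by zero and the component decomposition) for the $N$-spaces. You are in fact more complete than the paper on one point, namely the surjectivity of $N_f(\Omega)\to N_{f^p}(U)$, where your use of the unramified covering $f^k\colon V\to U$ as a local hyperbolic isometry (valid precisely because $\Omega\subset\Omega_f$) correctly supplies the converse direction that the paper leaves implicit.
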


\begin{proof}
Every Beltrami differential $\mu \in M_f(\Omega)$ is invariant under $f$, hence under $f^p$.
Conversely, if $\mu$ is a Beltrami differential on $U$ invariant under $f^p$, then
$\mu$ extends to a Beltrami differential $\tilde{\mu}$ invariant on  $\Omega_i$ 
in the following way : 
if $V$ is a component of $\Omega_i$, then there exists $k \in \N$ (defined up to a multiple of
 $p$ ) such that $f^k_{|V} : V \rightarrow U$.
We then set $\tilde{\mu}_{|V} = (f^k)^*\mu$, and this definition is valid if $V$ belongs 
to the same cycle as $U$ since $\mu=(f^p)^* \mu$.

This identification maps $N_f(\Omega)$ onto $N_{f^p}(U)$ since if 
$\mu = \dbar \xi \in N_f(\Omega)$, then $\hat{\xi}(z)=\xi(z)$ if $z \in U$ and $0$
else is such that $\dbar \hat{\xi} = \mu_{|U}$ by theorem A, and therefore 
$\mu_{|U} \in N_{f^p}(U)$.
\end{proof}

\begin{lem}\label{lem:invarianceparrotation}
Let $\mu$ be a Beltrami differential invariant under a holomorphic function $g$.
In both of the following cases : $g(z)=e^{2i\pi \alpha}z$, $\alpha \notin \mathbb{Q}$, and 
$g(z)=z^d$, $d \geq 2$, $\mu$ is then invariant under all rotations,
and we have in local coordinates  :
$$\mu(r e^{it}) = c(r) e^{2it} \frac{\overline{dz}}{{dz}}$$
\end{lem}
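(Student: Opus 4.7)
The plan is to translate the invariance $g^*\mu = \mu$ into a functional equation for the local coefficient of $\mu$, then Fourier-expand in the angular variable and read off the surviving modes.

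In a local chart, writing $\mu = m(z)\,\overline{dz}/dz$, the standard transformation rule makes $g^*\mu = \mu$ equivalent to
\[
m(g(z))\,\frac{\overline{g'(z)}}{g'(z)} = m(z) \quad \text{for a.e. } z.
\]
Since $\mu \in L^\infty$, Fubini guarantees that the angular slice $t \mapsto m(re^{it})$ lies in $L^2(\T)$ for almost every $r$, so I can legitimately expand $m(re^{it}) = \sum_{n \in \Z} a_n(r) e^{int}$ and match Fourier coefficients in the functional equation.

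In the first case $g(z) = e^{2i\pi\alpha}z$ the equation reads $m(e^{2i\pi\alpha}z) = e^{4i\pi\alpha}m(z)$, which translates termwise into $a_n(r)\bigl(e^{2i\pi n\alpha} - e^{4i\pi\alpha}\bigr) = 0$ for every $n$. Irrationality of $\alpha$ forces $a_n \equiv 0$ whenever $n \neq 2$, so $m(re^{it}) = a_2(r) e^{2it}$. In the second case $g(z) = z^d$, a direct polar-coordinate computation gives $\overline{g'(z)}/g'(z) = e^{-2i(d-1)t}$, so the invariance becomes $m(r^d e^{idt}) = e^{2i(d-1)t}\, m(re^{it})$. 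Matching the coefficient of $e^{ikt}$ on both sides yields $a_k \equiv 0$ unless $k \equiv 2 \pmod d$, together with the recursion $a_{2+jd}(r) = a_{2+j}(r^d)$ for every $j \in \Z$. Iterating this recursion, any nonzero mode $a_m$ with $m \neq 2$ is eventually related to an index outside the residue class $2 \bmod d$ (since $m-2$ cannot be divisible by arbitrarily high powers of $d$ unless $m=2$), and must therefore vanish; only $a_2$ survives, subject to $a_2(r) = a_2(r^d)$.

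This yields $m(re^{it}) = c(r) e^{2it}$ with $c(r) := a_2(r)$ in both cases, and the transformation rule shows at once that such an expression is preserved by every rotation $z \mapsto e^{i\theta}z$, which settles the remaining assertion. The only mildly technical point is the legitimacy of the Fourier expansion given that $m$ is merely $L^\infty$, but this is handled by the Fubini slice remark; the main conceptual step is the iterative argument in the $z^d$ case, which is elementary once the Fourier matching has been set up.
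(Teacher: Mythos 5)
Your proof is correct. The rotation case is identical to the paper's argument: write the invariance in coordinates and kill all Fourier modes except $n=2$ using the irrationality of $\alpha$. In the case $g(z)=z^d$ your route genuinely differs: the paper does not work with the functional equation for $z^d$ itself, but first observes that $\mu=g^*\mu$ forces invariance under every branch of $g^{-n}\circ g^n$, i.e.\ under all rotations by $2k\pi/d^n$; since these angles are dense it then reruns the rotation argument (for each $n\neq 2$ one picks $m$ with $d^m\nmid(n-2)$) and never needs to relate coefficients at different radii. You instead Fourier-match the equation $m(r^de^{idt})=e^{2i(d-1)t}m(re^{it})$ directly, which produces both the congruence constraint $k\equiv 2\pmod d$ and the radial recursion $a_{2+jd}(r)=a_{2+j}(r^d)$, and you conclude by the finite-divisibility argument on $j$. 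Both are sound (the change of variable $r\mapsto r^d$ preserves null sets, so the iteration is legitimate for a.e.-defined coefficients); the paper's reduction is conceptually lighter, while yours yields the extra (unneeded here) information $c(r)=c(r^d)$. Your handling of the measure-theoretic side (Fubini slices, matching coefficients for a.e.\ $r$) is also adequate.
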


\begin{proof}
The proof is a modification of the usual proof of the ergodicity of rotations
of irrational angles.

Let us start with the case of a rotation of irrational angle $g(z)= e^{2i\pi \alpha}z$.
Let $\mu$ be a Beltrami differential invariant by $g$. We have, in local coordinates :
$$\mu(z)=g^*\mu(z) = e^{-4 i \pi \alpha} \mu(e^{2i\pi \alpha}z)$$
By expanding into Fourier series on the circles $|z|=r$, we obtain
that $\mu$ must be of the form
$$\mu(r e^{it}) = c(r) e^{2it} \frac{\overline{dz}}{{dz}}$$
where $c$ is a $L^\infty$ function. In particular, $\mu$ is invariant by rotations,
and one easily verifies that all rotation-invariant Beltrami differential must 
be of this form.

If we now assume that $g(z)=z^2$, $d \geq 2$, and that $\mu$ is invariant by $g$, then
$\mu$ is invariant by all branches of $g^{-n} \circ g^n$, hence by all
 rotations of angles $\frac{2k\pi}{d^n}$, for all $k \in \N$ :
$$\mu(z) = e^{-4 i k\pi/d^n} \mu(e^{2ik\pi/d^n}z).$$
Similarly, by expanding into Fourier series on the circles centered on $0$, we obtain :
$$\mu(r e^{it}) = c(r) e^{2it} \frac{\overline{dz}}{{dz}}.$$
\end{proof}

\begin{lem}\label{lem:calculdimM/N}
Let $\Omega$ be a rotation invariant planar open set. Let $M(\Omega)$
be the space of rotation-invariant Beltrami differentials on $\Omega$,
and $N(\Omega)$ the subspace of $M(\Omega)$ of elements of the form $\dbar \xi$,
where $\xi$ is a hyperbolically bounded quasiconformal vector field on $\partial \Omega$.
\begin{itemize}
\item[$i)$] If $\Omega$ is the unit disk, $\dim M(\Omega)/N(\Omega)=0$.
\item[$ii)$] If $\Omega$ is a ring of finite modulus, then $\dim  M(\Omega)/N(\Omega)=1$.
\end{itemize}
\end{lem}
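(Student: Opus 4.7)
The plan is to invoke the corollary (just above) that identifies $N(\Omega)$ with the space of rotation-invariant Beltrami differentials that are infinitesimally trivial on $\Omega$, and then to compute the pairings $\int_\Omega q \cdot \mu$ against holomorphic quadratic differentials using Fourier series on circles.

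First, by Lemma~\ref{lem:invarianceparrotation} applied to any irrational rotation (under which every rotation-invariant $\mu$ is in particular invariant), any $\mu \in M(\Omega)$ has the local form
$$\mu(re^{it}) = c(r)\, e^{2it}\, \frac{d\bar z}{dz}, \qquad c \in L^\infty.$$
By the corollary, $\mu \in N(\Omega)$ iff $\mu$ is infinitesimally trivial on $\Omega$, i.e. $\int_\Omega q \cdot \mu = 0$ for every integrable holomorphic quadratic differential $q$ on $\Omega$. So I only need to identify which $c$ yield an infinitesimally trivial $\mu$.

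Second, I compute the pairing in polar coordinates. Using $dz \wedge d\bar z = -2i\,r\,dr\,dt$, Fubini (which applies because $q \in L^1$ and $\mu \in L^\infty$), and the uniformly convergent Laurent expansion $q(re^{it}) = \sum_{n} a_n r^n e^{int}$ on each circle $\{|z|=r\} \subset \Omega$, the $t$-integral extracts $2\pi$ times the $e^{-2it}$ Fourier coefficient of $q(re^{it})$, namely $a_{-2} r^{-2}$. Hence
$$\int_\Omega q \cdot \mu \;=\; -4\pi i\, a_{-2} \int_{r_1}^{r_2} \frac{c(r)}{r}\, dr,$$
where $[r_1,r_2]$ is the radial range of $\Omega$.

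The two cases now follow immediately. For (i), every integrable holomorphic quadratic differential on $\D$ has Taylor expansion with $a_{-2} = 0$, so the pairing always vanishes, every $\mu \in M(\D)$ is infinitesimally trivial, and $M(\D) = N(\D)$. For (ii), the quadratic differential $q_0 = z^{-2}\, dz^2$ is holomorphic and integrable on the annulus (since $\int |z|^{-2}\, dA = 2\pi \log(r_2/r_1) < \infty$), and for $q_0$ we have $a_{-2}=1$; thus the linear functional $\ell : M(\Omega) \to \C$, $\mu \mapsto \int_{r_1}^{r_2} c(r)/r\, dr$, is non-zero (take $c\equiv 1$) and its kernel is exactly $N(\Omega)$, giving $\dim M(\Omega)/N(\Omega) = 1$.

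There is no real obstacle: the nontrivial analytic content (the characterization of infinitesimally trivial differentials via hyperbolically bounded quasiconformal vector fields) was already established through Theorem~A, and what remains is the elementary Fubini--Fourier calculation on rotation-invariant objects, together with the observation that only the coefficient of $z^{-2}$ in the Laurent expansion can produce a nontrivial pairing with $c(r)e^{2it}\,d\bar z/dz$.
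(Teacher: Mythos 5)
Your argument is correct, but it runs dual to the paper's. The paper proves the lemma by explicitly solving $\dbar \xi = \mu$: it checks that $\xi(re^{it}) = r h(r) e^{it}\frac{d}{dz}$ has $\dbar \xi = r h'(r) e^{2it}\frac{\overline{dz}}{dz}$, takes $h$ to be the primitive of $c(r)/r$ vanishing at $r=1$, and reads off the obstruction in the annulus case as the boundary value $h(r_0)=\int_1^{r_0} c(u)/u\,du$ --- the same linear functional you obtain. You instead invoke the corollary of Theorem~A identifying $N(\Omega)$ with the infinitesimally trivial elements of $M(\Omega)$, and compute $\int_\Omega q\cdot\mu$ by Fubini and Fourier expansion, which isolates the Laurent coefficient $a_{-2}(q)$; this is legitimate since that corollary is established before the lemma, so there is no circularity. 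What each approach buys: the paper's construction is self-contained (it never needs Bers' density theorem, which your route imports through the corollary) and produces the primitive $\xi$ explicitly, which is what the surrounding dimension count ultimately cares about; your computation makes it transparent that the single functional $\ell(\mu)=\int c(r)/r\,dr$ controls the pairing with \emph{all} of $Q(\Omega)$ at once (every pairing is $-4\pi i\,a_{-2}(q)\,\ell(\mu)$), and in particular it cleanly justifies the claim that the kernel of $\ell$ is exactly $N(\Omega)$ --- a point the paper's proof asserts with less detail, since one must still argue that no hyperbolically bounded primitive exists when $h(r_0)\neq 0$.
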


\begin{proof}
Consider a vector field $\xi$ of the form
$$\xi(r e^{it}) = h(r) r e^{it} \frac{d}{dz}$$
where $h : \R^+ \rightarrow \R^+$ is a lipschitz function. One can easily verify that 
$$\dbar \xi(r e^{it})= r h'(r) e^{2it} \frac{\overline{dz}}{dz}.$$

Therefore if $\mu$ is a rotation-invariant Beltrami differential,
hence of the form $\mu(r e^{it}) = c(r) e^{2it}\frac{\overline{dz}}{dz}$, 
and if we denote by $h$ the unique primitive
of $r \mapsto c(r)/r$ vanishing at $r=1$ and 
$\xi(r e^{it}) = r h(r) e^{it} \frac{d}{dz}$, we have $\dbar \xi = \mu$ in the sense of 
distributions, and $\xi$ is a quasiconformal vector field on all of $\rs$
vanishing on the unit disk.  

Therefore  $M(\Delta)=N(\Delta)$. If now $\Omega$
denotes a straight ring $\Omega = \{ r_0 <|z| < 1\}$, the map
$$\mu = c(r)e^{2it} \frac{\overline{dz}}{dz} \mapsto h(r_0) = \int_{1}^{r_0} \frac{c(u)}{u}du$$
is a linear form on $M(\Omega)$ whose kernel
is exactly $N(\Omega)$. This linear form is not trivial,
since if we take $\mu = r e^{2it}  \frac{\overline{dz}}{dz}$, then
$h(r_0)=r_0 -1 \neq 0$.
Therefore $\dim M(\Omega)/N(\Omega)=1$.
\end{proof}

We can now prove theorem \ref{th:calculrang}.

\begin{proof}[Proof of theorem \ref{th:calculrang}]
Denote by $\fatou$ the Fatou set of $f$, and $\julia$ its Julia set.
We will also denote by $\mathrm{Fix}_\julia$ the space of invariant line fields.
Since $\ker D\Psi(0) =N_f(\Omega_f)= N_f(\fatou)$ by
proposition \ref{prop:noyaudiffphi}, we have : 
$$\belf/\ker D\Psi(0) = (\mathrm{Fix}_\julia \oplus M_f(\fatou))/N_f(\fatou)$$

If $c$ is a critical point of $f$, then the closure of its grand orbit is equal to the union of
the Julia set $\julia$ and of a countable set of points and smooth circles (if the orbit of $c$ 
is captured by a superattracting cycle, or a cycle of Siegel disks or Herman rings). Therefore
 $\Lambda_f$ coincides with
$\julia$ up to a set of Lebesgues measure zero. Hence $M_f(\fatou)=M_f(\Omega_f)$.
We deduce from this observation that :
$$\mathrm{Fix}(f)/\ker D\Psi(0) = \mathrm{Fix}_\julia \oplus M_f(\Omega_f)/N_f(\Omega_f)$$

Consider the equivalence relationship on the set of connected components of $\Omega_f$
which identifies two components if and only if they have the same grand orbit, and let
$\Omega_i$ be the union of the elements of a class $i$ of this equivalence relationship. The
 $\Omega_i$ form a partition of $\Omega_f$
into completely invariant open subsets. By theorem \ref{th:sommedimension}, we have :
$$\mathrm{rg} D\Psi(0)=\dim \mathrm{Fix}_J + \sum_i \dim  M_f(\Omega_i)/N_f(\Omega_i).$$

Each component $\Omega_i$ is mapped by $f^n$ for $n$ large enough into a periodic Fatou 
component $U$. 
\footnote{However, in the case of a superattracting cycle, the components $\Omega_i$ 
need not be themselves preperiodic : if there is a critical orbit in a superattracting basin,
one gets components $\Omega_i$ which are annuli delimited by equipotentials that accumulate on 
the superattracting cycle.}
Let us now compute $\dim M_f(\Omega_i)/N_f(\Omega_i)$ depending on the nature of the 
periodic Fatou component $U$ it meets.
There are five cases to condider. 
Denote by $n_i$ the number of foliated acyclic critical classes meeting the grand orbit of $U$ \newline

$a)$ The case of an attracting cycle

If $U$ is a component of an attractive basin and $\Omega_i$ meets $U$, then $\Omega_i$ is
the grand orbit of $U$ with the countable set of the critical orbits captured by this cycle (and
 the cycle itslef) removed. So every component of $\Omega_i$ is 
preperiodic to $U - \Lambda_f$.
Thus $f_{|\Omega_i} : \Omega_i \rightarrow \Omega_i$ acts discretely, and 
$X_i = \Omega_i/f$ is a Riemann surface. In a linearizing coordinate for $f^k$
on the immediate basin of attraction (where $k \in \N^*$ is the period of the cycle
and $\rho$ is its multiplier), note 
$A = \{ | \rho | \leq z <1  \}$. It is a fundamental domain for the action of $f$ on 
the cycle of Fatou components $V$ containing $U$,
and $A - \Lambda_f$ is a fundamental domain for the action of $f$ on $\Omega_i$.
Therefore $X_i$ is the torus $X=A/f$ with a finite number $n_i$ of points removed,
where $n_i$ is the number of points of the post-critical set meeting $A$, i.e.
the number of foliated acyclic critical classes meeting $V$.

By lemma \ref{lem:dicretecase}, $\dim M_f(\Omega_i)/N_f(\Omega_i) = \dim M(X_i)/N(X_i)$.
Since $X_i$ is a finitely punctured torus, any hyperbolically bounded quasiconformal vector 
field on $X_i$ extends to a quasiconformal vector field on the torus vanishing on the 
marked points. Then the quotient $ M(X_i)/N(X_i)$ is exactly the tangent space to the 
Teichmüller space of $X_i$, which has dimension equal to the number $n_i$ of marked points
(see for example \cite{hubbard2006teichmuller}). \newline

$b)$ The case of a parabolic cycle

If $U$ is a parabolic cycle and $\Omega_i$ meets $U$, then $\Omega_i$  is the grand orbit
of  $U$ minus
the grand orbit of the critical points captured by $U$. In particular, all
component of $\Omega_i$ is iterated after after finitely many steps into $U$
with at most a countable set of points removed, and is preperiodic.
Moreover, 
$f_{|\Omega_i} : \Omega_i \rightarrow \Omega_i$ acts discretely, so
$X_i = \Omega_i/f$ is a Riemann surface isomorphic to  $X=U/f^p$ 
minus the grand orbit of critical points captured by $U$,
where $p$ is the period of the parabolic cycle associated to $U$. 

Via a Fatou coordinate, the action of $f^p$ on $U$ is conjugated to that of 
$z\mapsto z+1$ on an upper half-plane, so  $X$ is isomorphic to a cylinder
and $X_i$ is isomorphic to a cylinder with $n_i$ points removed, those points
corresponding to the $n_i$ grand critical orbits captured by $U$.
So $X$ is isomorphic to the Riemann sphere with two points $a_1$ and $a_2$ removed, and 
$X_i$ is isomorphic to the Riemann sphere with $n_i+2$ points $a_1, \ldots, a_{n_i+2}$ removed,
where the $a_j$, $j \geq 2$ correspond to the grand critical orbit meeting $U$.

By lemma \ref{lem:dicretecase}, $\dim M_f(\Omega_i)/N_f(\Omega_i) = \dim M(X_i)/N(X_i)$.
Since $X_i$ is a finitely punctured sphere, any hyperbolically bounded quasiconformal vector 
field on $X_i$ extends to a quasiconformal vector field on the torus vanishing on the 
marked points. Then the quotient $ M(X_i)/N(X_i)$ is exactly the tangent space to the 
Teichmüller space of $X_i$, which has dimension equal to the number $n_i+2-3 = n_i-1$,
where $n_i$ is the number of critical grand orbits meeting $\Omega_i$ 
(see for example \cite{hubbard2006teichmuller}). \newline

$c)$ The case of a Siegel disk

If $U$ is a Siegel disk, then the intersection of $\Lambda_f$ and the cycle of Fatou components
containing $U$ 
consists in a finite union of $n_i$ smooth circles, where $n_i$ is the number of
foliated acyclic critical classes captured by the cycle of Siegel disks  (it may be that
$n_i=0$). 
Therefore all components of $\Omega_i$ are preperiodic and are iterated in finitely many steps
to a periodic ring $A_i$ included in $U$ or a topological disk strictly included in $U$
 (if $n_i \neq 0$), or in all of the periodic Siegel disk
if $n_i=0$. In both cases, denote by  $V$ 
the periodic component of $\Omega_i$ to which is iterated a given component of $\Omega_i$.

By lemma \ref{lem:composantepreperiodique}, the space $M_f(\Omega_i)$ identifies
to the space $M_{f^p}(V)$
of Beltrami differentials on
$V$ that are invariant by $f^p_{|V}$, where $p$ is the period of the cycle associated to $U$,
and similarly $N_f(\Omega_i)$ identifies to $N_{f^p}(V)$.
A linearizing coordinate $\phi$ for $f^p$ conjugates $f^p : V \rightarrow V$ to   
$g(z)=e^{2i\pi \alpha }z$ on either the unit disk or an annulus $A(R)$, 
where $\alpha$ is an irrational rotation number. 
Therefore, by lemmas \ref{lem:invarianceparrotation} and \ref{lem:calculdimM/N}, 
$\dim M_f(\Omega_i)/N_f(\Omega_i)=1$ if $n_i \neq 0$ 
and $0$ else.   We then obtain $\sum_{j \in J} \dim M_f(\Omega_j)/N_f(\Omega_j)=n_i$.
  \newline

$d)$ The case of a Herman ring

This case is very similar to the case of a Siegel disk : $\Omega_i$ still consists
in the grand critical orbit of a periodic annulus. The only difference is that even
if there are no critical orbit lying in the Herman ring, the components
of $\Omega_i$ are still preperiodic to a ring and not a disk, and therefore
$\dim M_f(\Omega_i)/N_f(\Omega_i)=1$.  
We deduce : $\sum_{j \in J} \dim M_f(\Omega_j)/N_f(\Omega_j)=n_i+1$
where $n_i$ is the number of foliated acyclic critical classes
captured by $U$.  
 \newline

$e)$ The case of a superattracting cycle

If $U$ is a component of a superattracting cycle, 
then $\Lambda_f \cap U$ is a countable union of equipotentials (which are
smooth circles) and the superattracting cycle itself.

Assume first that there are no critical orbits captured
by the superattracting cycle. Then there is a unique $\Omega_i$
intersecting $U$, and it is the whole grand orbit of $U$. By lemma
\ref{lem:composantepreperiodique}, $M_f(\Omega_i)/N_f(\Omega_i) \simeq M_{f^p}(U)/N_{f^p}(U)$, where $p$  
is the period of $U$. 
Through a Böttcher coordinate, $f^p_{|U} : U \rightarrow U$ is conjugated to $g(z)=z^k$, $k \geq 2$.
By lemma \ref{lem:invarianceparrotation}, every Beltrami differential $\mu \in M_{f^p}(U)$
is invariant by rotation.
By lemma \ref{lem:calculdimM/N}, we deduce that if there are no critical orbits
meeting $U$, then $\dim M_{f^p}(U)/N_{f^p}(U)=0.$

Assume now that $n_i>0$, where $n_i$ is the number of foliated acyclic critical classes
meeting $U$.
Let us denote by $r_j$, $j \leq n_i$, the radii in Böttcher coordinates 
of the circles corresponding to foliated acyclic critical classes in $U$.
 Note $A(r,r')$ the annulus $\{r'<|z| <r\}$.
Let $\Omega_j \subset \Omega_f$ meeting $U$. Then for every component $V$
of $\Omega_j$, there exists a unique branch of $f^{-k} \circ f^l$ mapping $V$ into
the annulus $A(r_{j-1}, r_j)$ (with the convention $r_{-1}=1$). By lemma \ref{lem:invarianceparrotation}, 
$M_f(\Omega_j)$ identifies to
$M(A(r_{j-1,r_j}))$, and $N_f(\Omega_j)$ to $N(A(r_{j-1,r_j}))$.
We deduce from this that $\dim M_f(\Omega_i)/N_f(\Omega_i)=1$. 
Therefore $\sum_{j \in J} \dim M_f(\Omega_j)/N_f(\Omega_j)=n_i$. \newline

Summing things up, each Fatou component $U$
contributes $n_i$ to the dimension, where $n_i$ is the number of foliated acyclic critical classes
meeting $U$, except for Herman rings which contribute  $n_i+1$ and the
parabolic basins which contribute  $n_i-1$.

Moreover, ergodic line fields form a basis of the vector space $\mathrm{Fix}_J$ of invariant line fields, 
therefore $\dim \mathrm{Fix}_J = n_J$.
Thus we have : 
$$\mathrm{rg} D\Psi(0)=n_H + n_J + n_f  - n_p.$$

\end{proof}

\section{Proof of the main theorem}\label{sec:mainproof}

The first application of theorem \ref{th:calculrang} is that $\Psi^Z$ 
has constant rank :

\begin{coro}\label{coro:rgconstant}
Let $f$ be a rational map, $Z$ be an invariant set of cardinal $3$ and $\mu \in \Belf$.
Then $\mathrm{rg} D \Psi^Z(0)= \mathrm{rg} D \Psi^Z(\mu)$.
\end{coro}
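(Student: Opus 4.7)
The plan is to reduce the statement at an arbitrary point $\mu \in \Belf$ to a statement at the base point of a conjugate map, and then observe that the rank at the base point, as computed by Theorem \ref{th:calculrang}, is an invariant of quasiconformal conjugacy.

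Concretely, let $\psi = \phi_\mu^Z$ be the quasiconformal homeomorphism associated to $\mu$ fixing $Z$, and set $g = \psi \circ f \circ \psi^{-1} \in \ratd$. Since $Z$ is $f$-invariant and $\psi(Z) = Z$, the set $Z$ is also $g$-invariant. By Proposition \ref{lem:ranguniforme} we have
$$D\Psi_f^Z(\mu) = D\Psi_g^Z(0) \circ D\psi_*(\mu),$$
and since $\psi_*$ is a biholomorphism between $\Belf$ and $\mathrm{Bel}(g)$ (in particular its differential is an isomorphism of tangent Banach spaces), we conclude
$$\mathrm{rg}\, D\Psi_f^Z(\mu) \;=\; \mathrm{rg}\, D\Psi_g^Z(0).$$
By the same proposition applied at $\mu=0$ (with $\psi = \mathrm{Id}$) we trivially have $\mathrm{rg}\, D\Psi_f^Z(0)$ on the left-hand side, so it remains to show
$$\mathrm{rg}\, D\Psi_g^Z(0) \;=\; \mathrm{rg}\, D\Psi_f^Z(0).$$

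For this we apply Theorem \ref{th:calculrang} to both $f$ and $g$, which gives
$$\mathrm{rg}\, D\Psi_f^Z(0) = n_f(f) + n_H(f) + n_J(f) - n_p(f), \qquad \mathrm{rg}\, D\Psi_g^Z(0) = n_f(g) + n_H(g) + n_J(g) - n_p(g).$$
The key point is then that each of these four numerical invariants is preserved under the quasiconformal conjugacy $\psi$. Indeed, $\psi$ sends the Fatou set of $f$ to the Fatou set of $g$, respects the dynamical partition into periodic cycles of components, and is a homeomorphism, so it identifies superattracting, attracting, parabolic, Siegel and Herman cycles of $f$ with those of $g$; in particular $n_H(f)=n_H(g)$ and $n_p(f)=n_p(g)$. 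The image under $\psi$ of a critical point of $f$ is a critical point of $g$ with the same multiplicity, and $\psi$ sends grand orbits to grand orbits, so the partition of acyclic critical points into foliated classes lying in the Fatou set is preserved, whence $n_f(f) = n_f(g)$. Finally, $\psi$ transports $f$-invariant ergodic line fields on the Julia set of $f$ to $g$-invariant ergodic line fields on the Julia set of $g$, giving $n_J(f) = n_J(g)$.

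Combining these equalities with the two formulae above yields $\mathrm{rg}\, D\Psi_g^Z(0) = \mathrm{rg}\, D\Psi_f^Z(0)$, which together with the reduction $\mathrm{rg}\, D\Psi_f^Z(\mu) = \mathrm{rg}\, D\Psi_g^Z(0)$ from Proposition \ref{lem:ranguniforme} proves the corollary. The only step requiring any care is the qc-invariance of the four quantities $n_f, n_H, n_J, n_p$, which is standard but must be checked component by component; once this is granted, the corollary is an immediate combination of Theorem \ref{th:calculrang} and Proposition \ref{lem:ranguniforme}.
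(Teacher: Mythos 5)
Your proposal is correct and follows essentially the same route as the paper: reduce to the base point of the conjugate map via Proposition \ref{lem:ranguniforme}, then invoke Theorem \ref{th:calculrang} together with the quasiconformal invariance of $n_f$, $n_H$, $n_J$ and $n_p$ (for $n_J$ the paper notes explicitly that this rests on quasiconformal maps preserving Lebesgue null sets, which is the point implicit in your transport of ergodic line fields).
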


\begin{proof}
It is clear that $n_f$, $n_p$ and $n_H$ are invariant under quasiconformal conjugacy. 
The number $n_J$ is invariant as well since a quasiconformal homeomorphism 
preserve sets of Lebesgues measure zero (see \cite{gardiner2000quasiconformal}). Therefore if $\phi : \rs \rightarrow \rs$
is a quasiconformal conjugacy between $f$ and another rational map 
$g$, then $\phi^*$ maps 
invariant line fields for $f$ to invariant line fields for $g$.
Lemma \ref{lem:ranguniforme} concludes the proof.
\end{proof}

\begin{coro}\label{coro:commutantssvar}
The group $\qc(f)$ is a Banach submanifold
of $\Belf$, of tangent space to the identity equal to the space $N_f(\Omega_f)$ of Beltrami
differentials of the form $\dbar \xi$, where $\xi$ is a quasiconformal vector field
invariant by $f$.
\end{coro}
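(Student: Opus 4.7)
The plan is to realize $\qc(f)$ locally as a level set of $\Psi^Z$ and then apply the Banach constant rank theorem from the previous subsection.

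First I would fix a 3-point set $Z$ that is invariant under $f$ and contained in the closure $\Lambda_f$ of the grand critical orbit. Such a $Z$ is easy to produce: for $d \geq 2$ there are infinitely many repelling periodic cycles in the Julia set, which is contained in $\Lambda_f$, so three distinct repelling periodic points whose union is $f$-invariant will do. With this choice, the Beltrami-coefficient map $\phi \mapsto \mu_\phi$ identifies elements of $\qc(f)$ fixing $Z$ with the level set $(\Psi^Z)^{-1}(f) \subset \Belf$, because the equation $\Psi^Z(\mu) = f$ says exactly that $\phi_\mu^Z$ commutes with $f$. Any other element of $\qc(f)$ differs from such a normalized representative by a Möbius transformation in the (discrete or low-dimensional) centralizer of $f$, so near the identity we really are looking at $(\Psi^Z)^{-1}(f)$.

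Next I would invoke Corollary \ref{coro:rgconstant}: the differential of $\Psi^Z$ has constant rank on all of $\Belf$. The Banach constant rank theorem stated just above then immediately yields that $(\Psi^Z)^{-1}(f)$ is a Banach submanifold of $\Belf$, whose tangent space at each $\mu$ is $\ker D\Psi^Z(\mu)$. Specializing to $\mu = 0$ gives the submanifold structure at the identity, with tangent space $\ker D\Psi^Z(0)$.

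Finally I would identify $\ker D\Psi^Z(0)$ with $N_f(\Omega_f)$ using Proposition \ref{prop:noyaudiffphi} and the remark that follows it. That result says $\dbar \xi \in \ker D\Psi^Z(0)$ iff there exists $h \in \aut(\rs)$ with $\xi - h$ vanishing on $\Lambda_f$, subject to the normalization $\xi|_Z = 0$. Since $Z \subset \Lambda_f$ has cardinal $3$ and a nonzero element of $\aut(\rs)$ has at most two zeros on $\rs$, one forces $h = 0$; hence $\xi$ is itself $f$-invariant (by item $ii)$ of that proposition, $\Delta_f \xi = 0$), and its $\dbar$-derivative lies in $N_f(\Omega_f)$ in the sense of the statement. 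The converse is immediate: if $\xi$ is an $f$-invariant quasiconformal vector field on $\rs$, then $\Delta_f \xi = 0$, so $\dbar \xi \in \ker D\Psi^Z(0)$ up to the innocuous normalization ambiguity by $\aut(\rs)$ (which is absorbed into the choice of representative).

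The main obstacle is less analytic than bookkeeping: one must carefully track the Möbius ambiguity so that the assertion "$\qc(f)$ is a Banach submanifold" (rather than just its $Z$-normalized slice) actually parses, and one must verify that the tangent space, stated with no reference to $Z$ or to hyperbolic boundedness, coincides with the natural kernel $\ker D\Psi^Z(0)$. The hard analytic input has already been carried out — the constant rank of $D\Psi^Z$ (Corollary \ref{coro:rgconstant}) and the characterization of $\ker D\Psi^Z(0)$ via hyperbolically bounded vector fields (Theorem A and Proposition \ref{prop:noyaudiffphi}).
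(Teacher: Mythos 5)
Your proof is correct and follows essentially the same route as the paper: realize the ($Z$-normalized) group $\qc(f)$ as the fiber $(\Psi^Z)^{-1}(f)$, apply the Banach constant rank theorem via Corollary \ref{coro:rgconstant}, and identify the tangent space $\ker D\Psi^Z(0)$ with $N_f(\Omega_f)$ through Proposition \ref{prop:noyaudiffphi}. The extra care you take with the choice of an invariant $Z\subset\Lambda_f$ and with the M\"obius ambiguity is welcome but does not change the argument.
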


\begin{proof}
The space of quasiconformal homeomorphisms commuting with $f$ is exactly the fiber
$\Psi^{-1}(f)$. But by the above corollary, $\Psi^Z$ has constant finite rank on 
$\Belf$, therefore by the constant rank theorem, $(\Psi^Z)^{-1}(f)$ is a Banach submanifold
of finite codimension, whose tangent space to the identity is $\ker D\Psi(0)=N_f(\Omega_f)$.
Moreover, $N_f(\Omega_f)$ is also the space of Beltrami differentials of the form $\dbar \xi$, 
where $\xi$ is a 
quasiconformal vector field invariant by $f$ by proposition \ref{prop:noyaudiffphi}.
\end{proof}

Note that in particular, $\mathrm{QC}(f)$ is locally connected at the identity, and therefore 
on a neighborhood of the identity, any element of $\qc(f)$ belongs also to $\qc_0(f)$.

\begin{coro}
There exists a unique structure of complex manifold on $\teich(f)$ making 
the projection $\pi : \Belf \rightarrow \teich(f)$ holomorphic. For this complex structure,
$\pi$ is a split submersion.
\end{coro}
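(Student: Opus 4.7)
The plan is to produce finite-dimensional holomorphic slices in $\Belf$ transverse to the $\qc_0(f)$-orbits using the constant rank of $\Psi^Z$, and declare these slices to be local charts for $\teich(f)$. Three ingredients drive the construction: Corollary \ref{coro:rgconstant} guarantees that $D\Psi^Z$ has constant rank $r$ on $\Belf$; Corollary \ref{coro:commutantssvar}, together with the observation following it that $\qc(f)=\qc_0(f)$ on a neighborhood of the identity, identifies the tangent space to the $\qc_0(f)$-orbit at the identity with $N_f(\Omega_f)=\ker D\Psi^Z(0)$; and Lemma \ref{lem:ranguniforme} lets us transport both facts from the identity to an arbitrary base point $\mu_0\in\Belf$ via quasiconformal conjugation.

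At any $\mu_0\in\Belf$ I would apply the constant rank theorem to $\Psi^Z$ to obtain a local analytic diffeomorphism identifying a neighborhood of $\mu_0$ with a product $U_{\mu_0}\times W_{\mu_0}$ on which $\Psi^Z$ becomes the projection $(u,v)\mapsto u$; here $U_{\mu_0}$ is an open set in the $r$-dimensional complex vector space $\mathrm{Im}\,D\Psi^Z(\mu_0)$ and $W_{\mu_0}$ is an open set in $\ker D\Psi^Z(\mu_0)$. Writing $\psi$ for the normalized quasiconformal homeomorphism associated to $\mu_0$ and $g=\psi\circ f\circ\psi^{-1}$, the isomorphism $D\psi_*(\mu_0)$ from Lemma \ref{lem:ranguniforme} carries $\ker D\Psi_f^Z(\mu_0)$ onto $N_g(\Omega_g)$; applying Corollary \ref{coro:commutantssvar} to $g$ then identifies $\ker D\Psi^Z(\mu_0)$ with the tangent space at $\mu_0$ to the local $\qc_0(f)$-orbit through $\mu_0$. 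Consequently the slice $S_{\mu_0}:=U_{\mu_0}\times\{0\}$ is a finite-dimensional complex submanifold of $\Belf$ transverse to the local $\qc_0(f)$-orbit and meeting each nearby orbit in exactly one point.

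I would then declare the maps $\pi|_{S_{\mu_0}}$ to be the local charts of $\teich(f)$. Two such charts at base points $\mu_0,\mu_1$ are related by following fibers of the holomorphic constant-rank map $\Psi^Z$, so the transition maps between the finite-dimensional complex manifolds $S_{\mu_0}$ and $S_{\mu_1}$ are holomorphic. In these coordinates $\pi$ is literally the projection $(u,v)\mapsto u$, which shows simultaneously that $\pi$ is holomorphic and that it is a split submersion, the splitting being furnished by the local product decomposition $\Belf\cong U_{\mu_0}\times W_{\mu_0}$. Uniqueness follows tautologically: any complex structure on $\teich(f)$ for which $\pi$ is a holomorphic split submersion admits local holomorphic sections, and by transversality these sections must coincide with the $S_{\mu_0}\hookrightarrow\Belf$ up to biholomorphism, forcing the two complex structures to agree.

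The main technical obstacle sits inside the second paragraph: strictly speaking a fiber of $\Psi^Z$ is the orbit of an element of $\qc(f)$ fixing the normalization set $Z$ pointwise, whereas an element of $\qc_0(f)$ close to the identity need only fix $Z$ up to a small M\"obius correction. Absorbing this correction, so that the fiber of $\Psi^Z$ really does coincide locally with a single $\qc_0(f)$-orbit, requires careful bookkeeping using Lemma \ref{lem:ranguniforme} and an $f$-invariant choice of $Z$; in essence one works modulo the M\"obius action and then lifts. A milder second obstacle is propagating the equality $\qc(g)=\qc_0(g)$ near the identity, which is only stated at one base point in the remark after Corollary \ref{coro:commutantssvar}, to every $\mu_0\in\Belf$; this transfer is again handled by quasiconformal conjugation via Lemma \ref{lem:ranguniforme}.
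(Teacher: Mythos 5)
Your overall strategy is the same as the paper's: put $\Psi^Z$ in constant-rank normal form near each $\mu_0$ (legitimate by Corollary \ref{coro:rgconstant}), use the resulting slices as charts, check holomorphy of transitions via the local sections, and get uniqueness from the fact that holomorphic sections of $\pi$ are forced. So the architecture is right. But there is a genuine gap, and it sits exactly where you wave at it: the assertion that the slice $S_{\mu_0}$ ``meets each nearby orbit in exactly one point.'' The constant rank theorem only tells you that $S_{\mu_0}$ meets each local \emph{fiber of $\Psi^Z$} exactly once. To turn that into a statement about $\teich(f)=\Belf/\qc_0(f)$ you must prove that, locally, the fiber of $\Psi^Z$ through $\mu_0$ coincides with the $\qc_0(f)$-orbit. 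The delicate direction is: if $\mu_1,\mu_2$ lie in the same fiber (i.e.\ $u_1=u_2$ in the normal form), then the element $\psi=\phi^Z_{1}\circ(\phi^Z_{2})^{-1}$ of $\qc(f)$ actually lies in $\qc_0(f)$, which by definition requires exhibiting an isotopy from $\id$ to $\psi$ through elements of $\qc(f)$ with \emph{uniformly bounded dilatation}. Your tangent-space identification of $\ker D\Psi^Z(\mu_0)$ with $N_g(\Omega_g)$ does not give this: equality of tangent spaces at one point does not identify an orbit with a fiber as sets, and in any case the membership $\psi\in\qc_0(f)$ is a quantitative, not infinitesimal, condition.

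The paper closes this gap with an explicit construction you do not reproduce: it takes the radial paths $\mu_i(t)=\phi(tu_i,tv_i)$ inside the fiber, sets $\psi_t=\phi^Z_1(t)\circ(\phi^Z_2(t))^{-1}$, invokes the parametric Ahlfors--Bers theorem to get continuity of $t\mapsto\psi_t(z)$, and observes that the dilatations of the $\phi^Z_i(t)$ (hence of $\psi_t$) are uniformly bounded on $t\in[0,1]$; this produces the required isotopy and shows $\psi\in\qc_0(f)$. Your closing paragraph instead identifies the ``main technical obstacle'' as a M\"obius-normalization mismatch between $\qc(f)$ fixing $Z$ and $\qc_0(f)$; that issue is comparatively harmless (the paper sidesteps it by the converse implication $\qc_0(f)\subset\qc(f)$ for the easy direction), whereas the isotopy-with-uniform-dilatation argument is the one that genuinely needs to be written down and is not supplied by ``careful bookkeeping.'' A secondary remark: your uniqueness argument assumes the competing complex structure makes $\pi$ a holomorphic \emph{split submersion}, whereas the statement asserts uniqueness among structures merely making $\pi$ holomorphic; the paper's (admittedly terse) argument is that for any such structure the local holomorphic sections of $\pi$ still define the same atlas.
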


\begin{proof}
Let $\mu \in \Belf$.
By the constant rank theorem \ref{coro:rgconstant}, there exists germs of biholomorphisms
$\phi : (\mathrm{Im} D\Psi^Z(\mu)\oplus \ker  D\Psi^Z(\mu),0) \rightarrow (\Belf,\mu)$ et $\chi : (\ratd,g) \rightarrow (\ratd,g)$
such that $\chi \circ \Psi^Z \circ \phi(u,v) = u$ for all $(u,v) \in \mathrm{Im} D\Psi^Z(\mu) \oplus \ker D\Psi^Z(\mu)$, 
where $g = \phi_\mu \circ f \circ \phi_\mu^{-1}$.

In particular, $\Psi^Z \circ \phi(u_1,v_1)=\Psi^Z \circ \phi(u_2,v_2)$ if and only if
$u_1=u_2$ ; moreover, if we note $\mu_i = \phi(u_i,v_i)$, $1\leq i \leq 2$, then
$\Psi^Z(\mu_1) = \Psi^Z(\mu_2)$ if and only if $\phi_1^Z \circ (\phi_2^Z)^{-1} \in \qc(f)$
where $\phi_i^Z$ is the quasiconformal homeomorphism corresponding to $\mu_i$ and fixing $Z$.

We claim that $\pi(\mu_1) = \pi(\mu_2)$ if and only if $u_1 = u_2$. Indeed, if
 $\pi(\mu_1)=\pi(\mu_2)$, 
then  $\phi_1^Z \circ (\phi_2^Z)^{-1} \in \qc_0(f)$ and in particular 
$\phi_1^Z \circ (\phi_2^Z)^{-1} \in \qc(f)$, so $u_1=u_2$. If now we 
assume that $u_1=u_2$, then $\psi := \phi_1^Z \circ (\phi_2^Z)^{-1} \in \qc(f)$, and we have 
to prove that in fact $\psi \in \qc_0(f)$. Let $\phi_i^Z(t)$ be the quasiconformal 
homeomorphisms corresponding to $\mu_i(t)= \phi(t u_i,t v_i)$, $1 \leq i \leq 2$ and
$t \in [0,1]$, and $\psi_t = \phi_1^Z(t) \circ (\phi_2^Z)(t)^{-1}$.
Since for all $t \in [0,1]$, $\mu_i(t) = \phi^{-1}(tu_1, tvi)$, we have 
$\psi_t \in \qc(f)$, and $\psi_0 = \mathrm{Id}$. The maps $t \mapsto \mu_i(t)$ are 
analytic, so by the parametric Ahlfors-Bers theorem so are the maps $t \mapsto \phi_i^Z(t)$.
Therefore, for all $z \in \rs$, the map 
$t \mapsto \psi_t(z)= \phi_1^Z(t) \circ (\phi_2^Z)(t)^{-1}(z)$ is continuous and $\psi_t$
is an isotopy to the identity through elements of $\qc(f)$. Moreover, 
since the $\phi_i^Z(t)$ have uniformly bounded dilatation, so does $\psi_t$. 
\footnote{Note however that the Beltrami coefficient of $\psi_t$ needs not a priori depend continuously 
on $t$.}
 So $\psi = \psi_1 \in \qc_0(f)$,
which proves the claim.

Therefore the map $\tilde{\phi} : \mathrm{Im} D\Psi^Z(0) \rightarrow \ratd$ defined by
$\tilde{\phi}(u)=\pi \circ \phi(u,0)$, where $\pi : \Belf \rightarrow \teich(f)$ is the projection, 
is a germ of homeomorphism  and makes the following diagram commute :

$$
\xymatrix{ 
  \mathrm{Im} D\Psi^Z(0) \oplus \ker D\Psi^Z(0) \ar[rr]^{\phi} \ar[d]^{\pi_1} & & \Belf \ar[d]^{\pi} \\
  \mathrm{Im} D\Psi^Z(0)  \ar[rr]^{\tilde{\phi}} & &\teich(f) \\}
$$

The map $\pi_1 : \mathrm{Im} D\Psi^Z(\mu) \oplus \ker D\Psi^Z(\mu) \rightarrow \mathrm{Im}
 D\Psi^Z(\mu)$ being the projection onto the first factor.

We can now define local sections of $\pi$ by transporting local holomorphic sections
of $\pi_1$ through the $\phi$ coordinates. 

Let us prove that these local sections of $\pi$ can be glued together compatibly to define
a complex atlas on $\teich(f)$.
Let $h_1, h_2$ be two such local sections of $\pi$ defined in a neighborhood of $[\mu] \in \teich(f)$ :
we must prove that $h_2 \circ h_1^{-1}=h_2 \circ \pi : \Belf \rightarrow \Belf$ is 
holomorphic. 
Let $g_1$ and $g_2$ be the corresponding right inverses of
$\pi_1 : \mathrm{Im} D\Psi^Z(\mu) \oplus \ker D\Psi^Z(\mu) \rightarrow \mathrm{Im}
 D\Psi^Z(\mu)$.

The following diagram commutes :

$$
\xymatrix{ 
    \Belf \ar[rr]^{h_2 \circ \pi}   \ar@<2pt>[dr]^\pi &   
     & \Belf \ar@<2pt>[dl]^\pi  \\
     &     \teich(f) \ar@<2pt>[lu]^{h_1}  \ar@<2pt>[ru]^{h_2}   
     &   \\}
$$

and therefore this diagram commutes as well :

$$
\xymatrix{ 
    \Belf \ar[rr]^{h_2 \circ \pi}  \ar[d]^{\phi^{-1}} \ar@<2pt>[dr]^\pi &   
     & \Belf \ar@<2pt>[dl]^\pi  \\
     \mathrm{Im} D\Psi^Z(\mu) \oplus \ker D\Psi^Z(\mu)   \ar@<2pt>[dr]^{\pi_1}
     &     \teich(f) \ar@<2pt>[lu]^{h_1}  \ar@<2pt>[ru]^{h_2} \ar[d]^{\tilde{\phi}}   
     &  \mathrm{Im} D\Psi^Z(\mu) \oplus \ker D\Psi^Z(\mu) \ar[u]^\phi  \ar@<2pt>[dl]^{\pi_1}  \\
     &  \mathrm{Im} D\Psi^Z(\mu)\ar@<2pt>[lu]^{g_1} \ar@<2pt>[ru]^{g_2} &  \\ }
$$

A diagramm chase then shows that $h_2 \circ \pi$ is holomorphic, 
which proves the existence of a complex structure on $\teich(f)$ meeting the requirements.

Unicity comes from the fact that for any complex structure making
the projection $\pi : \Belf \rightarrow \teich(f)$ into a holomorphic map, the local holomorphic sections of $\pi$ 
still define an atlas.

Lastly, the fact that $\pi$ admits local holomorphic sections is precisely equivalent to $\pi$ 
being a split submersion.
\end{proof}

We can finally prove the main theorem :

\begin{thmM}
The map $\Psi_T^Z : \teich(f) \rightarrow \ratd$ is an immersion,
whose image is transverse to $\of$.
\end{thmM}

\begin{proof}
By corollary \ref{coro:rgconstant} it is enough to show that it is an immersion at $0$.
By definition, we have $\Psi^Z = \Psi_T^Z \circ \pi$, and therefore
$$D\Psi^Z(0) = D\Psi_T^Z([0]) \circ D\pi(0)$$
Injectivity of $D\Psi_T^Z([0])$ is then equivalent to the property $\ker D\Psi^Z(0) = \ker D\pi(0)$.

By proposition \ref{prop:noyaudiffphi}, $\ker D\Psi^Z(0)=N_f(\Omega_f)$, and 
by corollary \ref{coro:commutantssvar}, $\ker D\pi(0)=N_f(\Omega_f)$, which concludes the proof.
\end{proof}

\begin{defi}
If $A \subset \rs$ is closed, we note $Q(A)$ the Banach space of integrable quadratic differentials
on $\rs$ and holomorphic on $\rs- A$, equipped with the
$L^1$ norm.
\end{defi}

\begin{coro}
We have the following identification :
$$T_0 \teich(f) = \belf/\{\dbar \xi, \xi = f^*\xi \}$$
$$T_0^* \teich(f)= Q(\Lambda_f)/\overline{\nabla_f Q(\Lambda_f)}.$$
\end{coro}

\begin{proof}
The first statement is a direct consequence of 
corollary \ref{coro:commutantssvar}.

Since $\teich(f)$ is a finite-dimensional manifold, it is enough to prove that
$$\left( Q(\Lambda_f)/\overline{\nabla_f Q(\Lambda_f)} \right) ^*$$
 identifies to $T_0 \teich(f)$.

By the Hahn-Banach theorem, every linear form on $Q(\Lambda_f)$ may be represented by a 
$L^\infty$ Beltrami differential. Moreover, if 
$q \in \overline{\nabla_f Q(\Lambda_f)}$, then $\int_{\rs} q \cdot \mu = 0$ for all
Beltrami differential $\mu$ invariant under $f$, and $\int_{\rs} q \cdot \mu=0$
for all quadratic differential $q \in Q(\Lambda_f)$ and all infinitesimally trivial
Beltrami differential on $\Lambda_f$, namely
any Beltrami differential of the form $\mu = \dbar \xi$, where $\xi$ is a quasiconformal
vector field on $\rs$ vanishing on $\Lambda_f$ (see \cite{gardiner2000quasiconformal}).
Therefore by theorem A, every continuous linear form on 
$ Q(\Lambda_f)/\overline{\nabla_f Q(\Lambda_f)}$ (for the quotient norm corresponding 
to the  $L^1$ norm)
may be represented by an element of $T_0 \teich(f)$, with the dual norm coinciding
with the quotient $L^\infty$ norm (it is the Teichmüller metric of
 $\teich(\Lambda_f)$, see \cite{gardiner2000quasiconformal}, 
\cite{hubbard2006teichmuller}).

This representation is unique, since if $\mu$ is a $L^\infty$ Beltrami differential
 annihilating all of $Q(\Lambda_f)$, then $\mu \in N_f(\Lambda_f)$ by 
theorem A.
\end{proof}

Note that we obtain that $Q(\Lambda_f)/\overline{\nabla_f Q(\Lambda_f)}$ has finite
dimension which is less than $2d-2$.

\bibliographystyle{alpha}
\bibliography{biblio}

\end{document}